\documentclass[a4paper]{amsart}

\usepackage{amsmath,amsthm,amssymb,enumerate}
\usepackage{epsfig}
\usepackage{amssymb}
\usepackage{mathtools}
\usepackage{amsmath}
\usepackage{amssymb}
\usepackage{amsmath,amsthm}
\usepackage[latin1]{inputenc}
\usepackage[T1]{fontenc}
\usepackage{path}
\usepackage{ae,aecompl}
\usepackage{amsfonts}
\usepackage{amsxtra}
\usepackage{euscript,mathrsfs}
\usepackage{color}
\usepackage[left=3.4cm,right=3.4cm,top=4cm,bottom=4cm]{geometry}
\usepackage[citecolor=blue,colorlinks=true]{hyperref}
\allowdisplaybreaks

\usepackage{enumitem}
\setenumerate{label={\rm (\alph{*})}}

\usepackage{amsfonts}
\usepackage{amsxtra}

\usepackage[frak,hyperref,theorem_section]{paper_diening}
\numberwithin{equation}{section}

\newcommand{\Div}{\divergence}

\newcommand{\R}{\mathbb R}
\newcommand{\N}{\mathbb N}

\newcommand{\E}{\mathbb E}
\newcommand{\p}{\mathbb P}

\newcommand{\F}{\mathfrak F}

\newcommand{\A}{\mathcal A}

\newcommand{\dd}{\mathrm d}
\newcommand{\dx}{\, \mathrm{d}x}
\newcommand{\dy}{\, \mathrm{d}y}

\newcommand{\ds}{\, \mathrm{d}\sigma}
\newcommand{\dt}{\, \mathrm{d}t}
\newcommand{\dxt}{\,\mathrm{d}x\, \mathrm{d}t}
\newcommand{\dxs}{\,\mathrm{d}x\, \mathrm{d}\sigma}
\newcommand{\dif}{\mathrm{d}}

\newcommand{\mf}{\mathfrak{F}}

\newcommand{\prst}{\mathbb{P}}

\newcommand{\mn}{\mathbb{N}}

\newcommand{\mt}{\mathcal{O}}
\newcommand{\tor}{\mathcal{O}}

\DeclareMathOperator{\diver}{div}




\begin{document}

\title[2D stochastic Navier--Stokes equations in bounded domains]
{Error analysis for 2D stochastic Navier--Stokes equations in bounded domains with Dirichlet data}

\author{Dominic Breit}
\address{Department of Mathematics, Heriot-Watt University, Riccarton Edinburgh EH14 4AS, UK}
\email{d.breit@hw.ac.uk}

\author{Andreas Prohl}
\address{Mathematisches Institut,
Universit\"at T\"ubingen,
Auf der Morgenstelle 10,
D-72076 T\"ubingen
Germany}
\email{prohl@na.uni-tuebingen.de}

%
%

\begin{abstract}
We study a finite-element based space-time discretisation for the 2D
stochastic Navier--Stokes equations in a bounded domain supplemented with no-slip boundary conditions. We prove optimal convergence rates in the energy norm with respect to convergence in probability, that is convergence of order (almost) 1/2 in time and 1 in space. This was previously only known in the space-periodic case, where higher order energy estimates for any given (deterministic) time are available. In contrast to this, estimates in the Dirichlet-case are only known for a (possibly large) stopping time. We overcome this problem by 
introducing an approach based on discrete stopping times. This replaces the localised estimates (with respect to the sample space) from earlier contributions.

\end{abstract}

\keywords{Stochastic Navier--Stokes equations \and error analysis \and space-time discretisation  \and convergence rates}
\subjclass[2010]{65M15, 65C30, 60H15, 60H35}

\date{\today}

\maketitle

%
%
%
%
%
%
%
%
%
%

\section{Introduction}

We are concerned with the numerical approximation of the 2D stochastic Navier--Stokes equations in a smooth bounded domain $\mt\subset\R^2$ supplemented with no-slip boundary conditions.
They describe the flow of a homogeneous incompressible fluid in terms of the velocity field $\bfu$ and pressure function $p$ defined on a filtered probability space $(\Omega,\mathfrak F,(\mathfrak F_t),\p)$ and read as
\begin{align}\label{eq:SNS}
\left\{\begin{array}{rc}
\dd\bfu=\mu\Delta\bfu\dt-(\bfu\cdot\nabla)\bfu\dt-\nabla p\dt+\Phi(\bfu)\dd W
& \mbox{in $\mathcal O_T$,}\\
\Div \bfu=0\qquad\qquad\qquad\qquad\qquad\,\,\,\,& \mbox{in $\mathcal O_T$,}\\
\bfu(0)=\bfu_0\,\qquad\qquad\qquad\qquad\qquad&\mbox{ \,in $\mt$,}\end{array}\right.
\end{align}
$\p$-a.s. in $\mathcal O_T:=(0,T)\times\mt$, where $T>0$, $\mu>0$ is the viscosity and $\bfu_0$ is a given initial datum. The momentum equation is driven by a cylindrical Wiener process $W$ and the diffusion coefficient $\Phi$ takes values in the space of Hilbert-Schmidt operators; see Section \ref{sec:prob} for details.

Existence, regularity and long-time behaviour of solutions to \eqref{eq:SNS} have been studied extensively over the last three decades, and we refer to \cite{KukShi} for a complete picture. Most of the available results consider \eqref{eq:SNS} with respect to periodic boundary conditions. In some cases this is only for a simplification of the presentation. For instance, the existence of stochastically strong solutions to \eqref{eq:SNS}
is not effected by the boundary condition. Looking at the spatial regularity of solutions the situation is completely different:
\begin{itemize}
\item If $\mt=\mathbb T^2$ --- the two-dimensional torus --- and \eqref{eq:SNS} is supplemented with periodic boundary conditions one can obtain estimates in any Sobolev space provided the data (initial datum and diffusion coefficient) are sufficiently regular; cf. \cite[Corollary 2.4.13]{KukShi}.
\item If, on the other hand, $\mt$ is a bounded domain with smooth boundary and \eqref{eq:SNS} is supplemented with the no-slip boundary condition
\begin{align}\label{eq:nosilp}
\bfu=0\quad \mbox{on $(0,T)\times\partial\mt$,}
\end{align}
it is still an open problem if the solution satisfies
\begin{align}\label{eq:reglack}
\E\big[\|\nabla\bfu(T)\|_{L^2_x}^2\big]<\infty
\end{align}
for any given $T<\infty$, cf. \cite{GlZi,KuVi}. Regularity estimates are only kown until a (possibly large) stopping time and even with this restriction the spatial regularity seems limited; see Lemma \ref{lem:reg} (c) and Remark \ref{rem:reg}.
\end{itemize} 
Moment estimates such as \eqref{eq:reglack} are crucial
for the numerical analysis. If they are not at disposal it is unclear how to obtain convergence rates for a discretisation of \eqref{eq:SNS}.
Consequently most, if not all available results are concerned with the space-periodic problem. In particular, it is shown in \cite{BrDo} and \cite{CP} for the space-periodic problem
that for any $\xi>0$
\begin{align}\label{eq:perror}
&\mathbb P\bigg[\max_{1\leq m\leq M}\|\bfu(t_m)-\bfu_{h,m}\|_{L^2_x}^2+\sum_{m=1}^M \tau\|\nabla\bfu(t_m)-\nabla\bfu_{h,m}\|_{L^2_x}^2>\xi\,\big(h^{2\beta}+\tau^{2\alpha}\big)\bigg]\rightarrow0
\end{align}
as $h,\tau\rightarrow0$ (where $\alpha<\frac{1}{2}$ and $\beta<1$ are arbitrary); see also \cite{BeMi1,BeMi2} for related results. Here $\bfu$ is the solution to \eqref{eq:SNS} and $\bfu_{h,m}$ the approximation of $\bfu(t_m)$ with discretisation parameters $\tau=T/M$ (time) and $h$ (space).
The relation \eqref{eq:perror} tells us that the convergence in probability is of order (almost) 1/2 in time and 1 in space. It seems to be an intrinsic feature of SPDEs with general non-Lipschitz nonlinearities such as \eqref{eq:SNS}
that the more common concept of a pathwise error (an error measured in $L^2(\Omega)$) is too strong (see \cite{Pr} for first contributions). Hence \eqref{eq:perror} is the best result we can hope for. The proof of \eqref{eq:perror} is based on 
estimates in $L^2(\Omega)$, which are localised with respect to the sample set. The size of the neglected sets shrinks asymptotically with respect to the discretisation parameters and is consequently not seen
in \eqref{eq:perror}. The localised $L^2(\Omega)$-estimates in question rely on an iterative argument in the $m$-th step of which one can only control the discrete solution up to the step $m-1$ (to avoid problems with $(\mathfrak F_t)$-adaptedness), while the continuous solution is estimated by means of the global regularity estimates being available in the periodic setting (recall the discussion above). \\
In contrast to the periodic situation, in the Dirichlet-case estimates are only known for a (possibly large) stopping time since the equality $\int_{{\mathcal O}} (\bfu\cdot\nabla)\bfu\cdot\Delta\bfu\dx=0$ is no longer available. Incorporating the latter case into the framework of the localised estimates, the iterative argument just mentioned fails: controlling the continuous solution in the $m$-th step only until the time $t_{m-1}$ is insufficient for the estimates, while ``looking into'' the interval $[t_{m-1},t_m]$ in this set-up destroys the martingale character of certain stochastic integrals we have to estimate. We overcome this problem by using an approach based on discrete stopping times, which replaces the localised $L^2(\Omega)$-estimates from earlier contributions. This allows to control all quantities even in the interval $[t_{m-1},t_m]$ and, at the same time, preserves the martingale property of the stochastic integrals (see also the discussion in Remark \ref{rem}).
As a result we obtain `global-in-$\Omega$' estimates up to the discrete stopping time; cf. Theorem \ref{thm:4}. The discrete stopping times are constructed such that they converge to $T$, where $T$ can be any given end-time. Consequently, the convergence in probability as in \eqref{eq:perror} follows for the Dirichlet-case, see our main result in Theorem \ref{thm:main}. We believe that this strategy will be of use also for other SPDEs with non-Lipschitz nonlinearities.

We work under the structural assumption of a solenoidal diffusion coefficient which vanishes at the boundary. This is crucial in the regularity estimate from Lemma \ref{lem:reg} (b) in order to control the correction term $V^N(t)$ in the proof.
Due to the counterexamples concerning the regularity for stochastic PDEs in bounded domains, see \cite{Kr}, this seems to be unavoidable. In fact, the same assumptions are made in the analytical paper \cite{GlZi} on which we built on.


\section{Mathematical framework}
\label{sec:framework}

\subsection{Probability setup}\label{sec:prob}

Let $(\Omega,\F,(\F_t)_{t\geq0},\prst)$ be a stochastic basis with a complete, right-continuous filtration. The process $W$ is a cylindrical $\mathfrak U$-valued Wiener process, that is, $W(t)=\sum_{j\geq1}\beta_j(t) e_j$ with $(\beta_j)_{j\geq1}$ being mutually independent real-valued standard Wiener processes relative to $(\F_t)_{t\geq0}$, and $(e_j)_{j\geq1}$ a complete orthonormal system in a separable Hilbert space $\mathfrak{U}$.
Let us now give the precise definition of the diffusion coefficient $\varPhi$ taking values in the set of Hilbert-Schmidt operators $L_2(\mathfrak U;\mathbb H)$, where
$\mathbb H$ can take the role of various Hilbert spaces. We define $L^2_{\Div}(\mt,\R^2)$ and $W^{1,2}_{0,\Div}(\mt,\R^2)$ to be the closure of $C^\infty_{c,\Div}(\mt,\R^2)$ -- the solenoidal $C^{\infty}_c(\mt,\R^2)$-functions -- in $L^2(\mt,\R^2)$ and $W^{1,2}_{0}(\mt,\R^2)$, respectively.
We also work with fractional Sobolev spaces
$W^{\sigma,p}(0,T;X)$ for $p\in(1,\infty)$ and $\sigma\in(0,1)$ and a Banach space $(X;\|\cdot\|_X)$ with norm given by
\begin{align*}
\|f\|_{W^{\sigma,p}(0,T;X)}^p:=\|f\|_{L^p(0,T;X)}^p+\int_{0}^T\int_{0}^T\frac{\|f(t)-f(s)\|_X^p}{|t-s|^{1+\sigma p}}\,\dif s\dt.
\end{align*}
Similarly,
$W^{\sigma,p}(\mt,\R^2)$ is the fractional Sobolev space with norm given by
\begin{align*}
\|v\|_{W^{\sigma,p}_x}^p:=\|v\|_{L^p_x}^p+\int_{\mt}\int_{\mt}\frac{|v(x)-v(y)|^p}{|x-y|^{2+\sigma p}}\dx\dy.
\end{align*}
We assume that  $\Phi(\bfu)\in L_2(\mathfrak U;L^2_{\Div}(\mt,\R^2))$ for $\bfu\in L^2_{\Div}(\mt,\R^2)$, and
$\Phi(\bfu)\in L_2(\mathfrak U;W^{1,2}_{0,\Div}(\mt,\R^2))$ for $\bfu\in W^{1,2}_{0,\Div}(\mt,\R^2)$, together with
\begin{align}\label{eq:phi0}
\|\Phi(\bfu)-\Phi(\bfv)\|_{L_2(\mathfrak U;L^2_x)}&\leq\,c\|\bfu-\bfv\|_{L^2_x}\qquad\forall \bfu,\bfv\in L^2_{\Div}(\mt,\R^2),\\
\label{eq:phi1a}
\|\Phi(\bfu)\|_{L_2(\mathfrak U;W^{1,2}_x)}&\leq\,c\big(1+\|\bfu\|_{W^{1,2}_x}\big)\qquad\forall \bfu\in W^{1,2}_{0,\Div}(\mt,\R^2),\\
\label{eq:phi1b}
\|D\Phi(\bfu)\|_{L_2(\mathfrak U;\mathcal L( L^{2}_x;L^2_x))}&\leq\,c\qquad\forall \bfu\in L^{2}_{\Div}(\mt,\R^2).
\end{align}
If we are interested in higher regularity, some further assumptions are in place and we require additionally 
that $\Phi(\bfu)\in L_2(\mathfrak U;W^{2,2}(\mt,\R^2))$ for $\bfu\in W^{2,2}\cap W^{1,2}_{0,\Div}(\mt,\R^2)$, together with
\begin{align}\label{eq:phi2a}
&\|\Phi(\bfu)\|_{L_2(\mathfrak U;W^{2,2}_x)}\leq\,c\big(1+\|\bfu\|_{W^{1,4}_x}^2+\|\bfu\|_{W^{2,2}_x}\big)\qquad\forall \bfu\in W^{2,2}\cap W^{1,2}_{0,\Div}(\mt,\R^2),\\\label{eq:phi2b}
&\|D^2\Phi(\bfu)\|_{L_2(\mathfrak U;\mathcal L( L^{2}_x\times L^2_x;L^2_x))}\leq\,c\qquad\forall \bfu\in L^{2}_{\Div}(\mt,\R^2).
\end{align}

Assumption \eqref{eq:phi0} allows us to define stochastic integrals.
Given an $(\mathfrak F_t)$-adapted process $\bfu\in L^2(\Omega;C([0,T];L^2_{\Div}(\mt)))$, the stochastic integral $$t\mapsto\int_0^t\varPhi(\bfu)\,\dif W$$
is a well-defined process taking values in $L^2_{\Div}(\mt,\R^2)$; see \cite{PrZa} for a detailed construction. Moreover, we can multiply by test functions to obtain
 \begin{align*}
\bigg(\int_0^t \varPhi(\bfu)\,\dd W,\bfphi\bigg)_{L^2_x}=\sum_{j\geq 1} \int_0^t( \varPhi(\bfu) e_j,\bfphi)_{L^2_x}\,\dd\beta_j \qquad \forall\, \bfphi\in L^2(\mt,\R^2).
\end{align*}
Similarly, we can define stochastic integrals with values in $W^{1,2}_{0,\Div}(\mt,\R^2)$ and $W^{2,2}(\mt,\R^2)$, respectively, if $\bfu$ belongs to the corresponding class.


\subsection{The concept of solutions}
\label{subsec:solution}
In dimension two, pathwise uniqueness for analytically weak solutions is known under the assumption \eqref{eq:phi0}; we refer the reader for instance to Capi\'nski--Cutland \cite{CC}, Capi\'nski \cite{Ca}. Consequently, we may work with the definition of a weak pathwise solution.

\begin{definition}\label{def:inc2d}
Let $(\Omega,\mf,(\mf_t)_{t\geq0},\prst)$ be a given stochastic basis with a complete right-con\-ti\-nuous filtration and an $(\mf_t)$-cylindrical Wiener process $W$. Let $\bfu_0$ be an $\mf_0$-measurable random variable with values in $L^2_{\Div}(\mt,\R^2)$. Then $\bfu$ is called a \emph{weak pathwise solution} \index{incompressible Navier--Stokes system!weak pathwise solution} to \eqref{eq:SNS} with the initial condition $\bfu_0$ provided
\begin{enumerate}
\item the velocity field $\bfu$ is $(\mf_t)$-adapted and
$$\bfu \in C_{\mathrm loc}([0,\infty);L^2_{\diver}(\tor,\R^2))\cap L^2_{\mathrm loc}(0,\infty; W^{1,2}_{0,\Div}(\tor,\R^2))\quad\text{$\p$-a.s.},$$
\item the momentum equation
\begin{align*}
\int_{\mt}\bfu(t)&\cdot\bfvarphi\dx-\int_{\mt}\bfu_0\cdot\bfvarphi\dx
\\&=\int_0^t\int_{\mt}\bfu\otimes\bfu:\nabla\bfvarphi\dx\,\dif s+\mu\int_0^t\int_{\mt}\nabla\bfu:\nabla\bfvarphi\dx\,\dif s+\int_0^t\int_{\mt}\Phi(\bfu)\cdot\bfvarphi\dx\,\dif W
\end{align*}
holds $\p$-a.s. for all $\bfvarphi\in W^{1,2}_{0,\diver}(\mt,\R^2)$ and all $t\geq0$.
\end{enumerate}
\end{definition}

\begin{theorem}\label{thm:inc2d}
Suppose that $\Phi$ satisfies \eqref{eq:phi1a} and \eqref{eq:phi1b}. Let $(\Omega,\mf,(\mf_t)_{t\geq0},\prst)$ be a stochastic basis with a complete right-continuous filtration and an $(\mf_t)$-cylindrical Wiener process $W$. Let $\bfu_0$ be an $\mf_0$-measurable random variable such that $\bfu_0\in L^r(\Omega;L^2_{\mathrm{div}}(\mt,\R^2))$ for some $r>2$. Then there exists a unique weak pathwise solution to \eqref{eq:SNS} in the sense of Definition \ref{def:inc2d} with the initial condition $\bfu_0$.
\end{theorem}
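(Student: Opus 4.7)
The plan is to follow the classical Galerkin/Skorokhod/Yamada--Watanabe strategy, exploiting the 2D Ladyzhenskaya inequality. First, pick an orthonormal basis $(\bfw_k)_{k\ge 1}$ of $L^2_{\Div}(\mt,\R^2)$ consisting of eigenfunctions of the Stokes operator (smooth up to the boundary and satisfying the no-slip condition, since $\partial\mt$ is smooth), and let $P_N$ be the $L^2$-orthogonal projection onto $H_N:=\Span\{\bfw_1,\dots,\bfw_N\}$. On $H_N\simeq\R^N$ one considers the finite-dimensional SDE for $\bfu_N(t)=\sum_{k=1}^N c_k^N(t)\bfw_k$ given by
\begin{align*}
\dd\bfu_N=P_N\bigl[\mu\Delta\bfu_N-(\bfu_N\cdot\nabla)\bfu_N\bigr]\dt+P_N\Phi(\bfu_N)\dd W,\qquad \bfu_N(0)=P_N\bfu_0,
\end{align*}
whose coefficients are locally Lipschitz; local strong solutions exist.

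Applying It\^o's formula to $\|\bfu_N\|_{L^2_x}^2$ kills the convective term (by the divergence-free structure and the vanishing trace) and, together with \eqref{eq:phi0}, BDG and Gronwall, yields
\begin{align*}
\E\Bigl[\sup_{0\le t\le T}\|\bfu_N(t)\|_{L^2_x}^r+\Bigl(\int_0^T\|\nabla\bfu_N\|_{L^2_x}^2\dt\Bigr)^{r/2}\Bigr]\le C(r,T,\bfu_0)
\end{align*}
for any $r\le$ the one from the assumption, so the solution is global. A standard fractional-in-time estimate (Bochner integral for the drift, Kolmogorov-type bound for the Wiener integral) gives $\bfu_N$ uniformly bounded in $L^r(\Omega;W^{\sigma,2}(0,T;(W^{1,2}_{0,\Div})^*))$ for some $\sigma<1/2$. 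By Aubin--Lions one obtains tightness of the laws of $(\bfu_N,W)$ on
$\bigl(L^2(0,T;L^2_{\Div})\cap C([0,T];(W^{1,2}_{0,\Div})^*)\bigr)\times C([0,T];\mathfrak U_0)$ with $\mathfrak U_0\supset\mathfrak U$ compactly. Jakubowski--Skorokhod supplies a new probability space carrying an a.s.-convergent subsequence; the strong $L^2_{t,x}$-convergence identifies the nonlinear term in the weak limit, yielding a martingale solution in the sense of Definition \ref{def:inc2d}.

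For pathwise uniqueness let $\bfu,\bfv$ be two solutions sharing a stochastic basis and initial datum, and put $\bfw=\bfu-\bfv$. It\^o's formula and Ladyzhenskaya's inequality $\|\bfw\|_{L^4_x}^2\le c\|\bfw\|_{L^2_x}\|\nabla\bfw\|_{L^2_x}$ (valid in 2D) yield, after Young's inequality and \eqref{eq:phi0},
\begin{align*}
\dd\|\bfw\|_{L^2_x}^2+\mu\|\nabla\bfw\|_{L^2_x}^2\dt\le c\bigl(1+\|\nabla\bfv\|_{L^2_x}^2\bigr)\|\bfw\|_{L^2_x}^2\dt+2\bigl(\bfw,(\Phi(\bfu)-\Phi(\bfv))\dd W\bigr)_{L^2_x}.
\end{align*}
Introducing stopping times $\tau_R:=\inf\{t:\int_0^t\|\nabla\bfv\|_{L^2_x}^2\dd s\ge R\}\wedge T$ and applying the stochastic Gronwall lemma on $[0,\tau_R]$ forces $\bfw\equiv 0$ on $[0,\tau_R]$; since $\tau_R\uparrow T$ almost surely, $\bfw\equiv 0$ on $[0,T]$.

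Finally, the Gy\"ongy--Krylov criterion promotes weak existence plus pathwise uniqueness into convergence in probability of the Galerkin sequence on the original basis $(\Omega,\F,(\F_t),\prst)$, producing the desired weak pathwise solution. The main obstacle is the compactness/identification step: because the Dirichlet boundary data prevents the identity $\int_{\mt}(\bfu\cdot\nabla)\bfu\cdot\Delta\bfu\dx=0$ (as emphasised in the introduction), no higher spatial regularity of $\bfu_N$ is at our disposal, and the limit passage in the convective term must be driven entirely by the energy bound combined with the Aubin--Lions strong convergence, just as in the periodic case.
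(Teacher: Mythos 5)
Your proposal is correct: the Galerkin scheme with Stokes eigenfunctions, the $L^2$-energy estimate exploiting the cancellation of the convective term, fractional-in-time bounds plus Aubin--Lions/Skorokhod for a martingale solution, pathwise uniqueness via Ladyzhenskaya and a stochastic Gronwall argument with the stopping times $\tau_R$, and finally Gy\"ongy--Krylov, is exactly the standard route to this statement. Note, however, that the paper does not prove this theorem at all -- it quotes it from the literature (Capi\'nski--Cutland and Capi\'nski for pathwise uniqueness, with existence being classical), so there is no in-paper argument to compare against; your outline is the argument underlying those references, and the only assumptions you actually use, \eqref{eq:phi0} (equivalently the Lipschitz bound implied by \eqref{eq:phi1b}) and linear growth, are consistent with the hypotheses of the theorem.
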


We give the definition of a strong pathwise solution to \eqref{eq:SNS} which exists up to a stopping time $\mathfrak t$. The velocity field here belongs $\p$-a.s. to $C([0,\mathfrak t];W^{1,2}_{0,\diver}(\mt,\R^2))$.

\begin{definition}\label{def:strsol}
Let $(\Omega,\mf,(\mf_t)_{t\geq0},\prst)$ be stochastic basis with a complete right-continuous filtration and an $(\mf_t)$-cylindrical Wiener process $W$. Let $\bfu_0$ be an $\mf_0$-measurable random variable with values in $W^{1,2}_{0,\diver}(\mt,\R^2)$. The tuple $(\bfu,\mathfrak t)$ is called a \emph{local strong pathwise solution} \index{incompressible Navier--Stokes system!weak pathwise solution} to \eqref{eq:SNS} with the initial condition $\bfu_0$ provided
\begin{enumerate}
\item $\mathfrak t$ is a $\p$-a.s. strictly positive $(\mathfrak F_t)$-stopping time;
\item the velocity field $\bfu$ is $(\mf_t)$-adapted and
$$\bfu(\cdot\wedge \mathfrak t) \in C_{\mathrm loc}([0,\infty);W^{1,2}_{0,\diver}(\mt,\R^2))\cap L^2_{\mathrm loc}(0,\infty;W^{2,2}(\mt,\R^2)) \quad\text{$\p$-a.s.},$$
\item the momentum equation
\begin{align}\label{eq:mom}
\begin{aligned}
&\int_{\mt}\bfu(t\wedge \mathfrak t)\cdot\bfvarphi\dx-\int_{\mt}\bfu_0\cdot\bfvarphi\dx
\\&=-\int_0^{t\wedge\mathfrak t}\int_{\mt}(\bfu\cdot\nabla)\bfu\cdot\bfvarphi\dx\,\dif s+\mu\int_0^{t\wedge\mathfrak t}\int_{\mt}\Delta\bfu\cdot\bfvarphi\dx\,\dif s+\int_{\mt}\int_0^{t\wedge\mathfrak t}\Phi(\bfu)\cdot\bfvarphi\,\dif W\dx
\end{aligned}
\end{align}
holds $\p$-a.s. for all $\bfvarphi\in C^{\infty}_{c,\diver}(\mt,\R^2)$
and all $t\geq0$.
\end{enumerate}
\end{definition}
Note that \eqref{eq:mom} certainly implies the corresponding formulation in Definition \ref{def:inc2d}. The reverse implication is only true for analytically strong solutions.\\
We finally define what a maximal strong pathwise solution is.
\begin{definition}[Maximal strong pathwise solution]\label{def:maxsol}
Fix a stochastic basis with a cylindrical Wiener process and an initial condition as in Definition \ref{def:strsol}. A triplet $$(\bfu,(\mathfrak{t}_R)_{R\in\N},\mathfrak{t})$$ is a maximal strong pathwise solution to system \eqref{eq:SNS} provided

\begin{enumerate}
\item $\mathfrak{t}$ is a $\p$-a.s. strictly positive $(\mathfrak{F}_t)$-stopping time;
\item $(\mathfrak{t}_R)_{R\in\mn}$ is an increasing sequence of $(\mathfrak{F}_t)$-stopping times such that
$\mathfrak{t}_R<\mathfrak{t}$ on the set $[\mathfrak{t}<\infty]$, as well as
$\lim_{R\to\infty}\mathfrak{t}_R=\mathfrak t$ $\p$-a.s., and
\begin{equation}\label{eq:blowup}
\mathfrak t_R:=\inf \big\{t\in[0,\infty):\,\,\|\bfu(t)\|_{W^{1,2}_x}\geq R\big\}\quad \text{on}\quad [\mathfrak{t}<\infty] ,
\end{equation}
with the convention that $\mathfrak{t}_R=\infty$ if the set above is empty;
\item each tuple $(\bfu,\mathfrak{t}_R)$, for $R\in\mn$,  is a local strong pathwise solution in the sense  of Definition \ref{def:strsol}.
\end{enumerate}
\end{definition}
We talk about a global solution
if we have (in the framework of Definition \ref{def:maxsol}) $\mathfrak t=\infty$ $\p$-a.s.
The following result is shown in \cite{Mi}; see also \cite{GlZi} for a similar statement.
\begin{theorem}\label{thm:inc2d}
Suppose that \eqref{eq:phi0}--\eqref{eq:phi1b} hold and
 that $\bfu_0\in L^2(\Omega,W^{1,2}_{0,\diver}(\mt,\R^2))$. Then there is a
unique global maximal strong pathwise solution to \eqref{eq:SNS} in the sense  of Definition \ref{def:maxsol}.
\end{theorem}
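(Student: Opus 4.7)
The plan is to construct a maximal strong pathwise solution via a Galerkin approximation with stopping-time localisation, to piece the local strong solutions together using pathwise uniqueness (available thanks to \eqref{eq:phi0}), and finally to argue globality ($\mathfrak t=\infty$ $\p$-a.s.) by means of the 2D Ladyzhenskaya inequality.

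I would fix the orthonormal basis $(\mathbf w_k)_{k\in\N}$ of $L^2_{\Div}(\mt,\R^2)$ formed by the eigenfunctions of the Dirichlet--Stokes operator and project \eqref{eq:SNS} onto $V_N:=\Span(\mathbf w_1,\ldots,\mathbf w_N)$. The resulting finite-dimensional SDE for $\bfu^N$ has locally Lipschitz drift and Lipschitz noise, hence admits a unique continuous $V_N$-valued solution on $[0,\infty)$. Itô's formula applied to $\|\bfu^N\|_{L^2_x}^2$, together with the antisymmetry $\int_{\mt}(\bfu^N\cdot\nabla)\bfu^N\cdot\bfu^N\dx=0$, \eqref{eq:phi0}, Burkholder--Davis--Gundy, and Gronwall, provides the unconditional energy bound
\begin{align*}
\E\Bigl[\sup_{t\in[0,T]}\|\bfu^N(t)\|_{L^2_x}^{2p}+\Bigl(\int_0^T\|\nabla\bfu^N\|_{L^2_x}^2\,\dif s\Bigr)^{p}\Bigr]\leq C(p,T),\qquad p\geq 1.
\end{align*}
To gain $W^{1,2}$-regularity I would introduce the stopping times $\tau_R^N:=\inf\{t\geq 0:\|\bfu^N(t)\|_{W^{1,2}_x}\geq R\}$ and apply Itô to $\|\nabla\bfu^N\|_{L^2_x}^2$. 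The convective contribution $\int_{\mt}(\bfu^N\cdot\nabla)\bfu^N\cdot\Delta\bfu^N\dx$, which no longer vanishes in the Dirichlet case, is dominated via the 2D Ladyzhenskaya and Stokes regularity inequalities by
\begin{align*}
c\|\bfu^N\|_{L^2_x}^{1/2}\|\nabla\bfu^N\|_{L^2_x}\|\Delta\bfu^N\|_{L^2_x}^{3/2}\leq \tfrac{\mu}{2}\|\Delta\bfu^N\|_{L^2_x}^2+c_\mu\|\bfu^N\|_{L^2_x}^2\|\nabla\bfu^N\|_{L^2_x}^4,
\end{align*}
while the Itô correction from the noise is controlled by \eqref{eq:phi1a}. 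A Gronwall argument on $[0,\tau_R^N]$ then gives $N$-uniform bounds on $\sup_{t\leq T\wedge\tau_R^N}\|\nabla\bfu^N(t)\|_{L^2_x}^2+\int_0^{T\wedge\tau_R^N}\|\Delta\bfu^N\|_{L^2_x}^2\,\dif s$ for each fixed $R$.

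These bounds render the laws of $(\bfu^N)$ tight in $L^2(0,T;W^{1,2}_{0,\Div})\cap C([0,T];L^2_{\Div,\mathrm{w}})$, and a Jakubowski--Skorokhod representation combined with standard martingale identification produces a martingale solution up to $\tau_R$. Pathwise uniqueness is then established by applying Itô to $\|\bfu_1-\bfu_2\|_{L^2_x}^2$ for two solutions with the same initial datum, bounding the convective difference via the 2D Ladyzhenskaya inequality and the noise difference via \eqref{eq:phi0}, and invoking Gronwall on the intersection of the two stopping intervals. The Gy\"ongy--Krylov criterion upgrades convergence in law to convergence in probability on the original stochastic basis, yielding, for every $R\in\N$, a local strong pathwise solution $(\bfu,\mathfrak t_R)$ in the sense of Definition \ref{def:strsol}. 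Uniqueness across different $R$ then glues these pieces into the maximal triple $(\bfu,(\mathfrak t_R)_R,\mathfrak t)$ of Definition \ref{def:maxsol}.

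The main obstacle is to prove $\mathfrak t:=\lim_R\mathfrak t_R=\infty$ $\p$-a.s. Setting $y(t):=\|\nabla\bfu(t)\|_{L^2_x}^2$, the $H^1$ estimate above reads on $[0,\mathfrak t_R]$
\begin{align*}
\dd y+\mu\|\Delta\bfu\|_{L^2_x}^2\,\dt\leq c\|\bfu\|_{L^2_x}^2\,y^2\,\dt+c(1+y)\,\dt+\dd M,
\end{align*}
where $M$ is a continuous local martingale whose quadratic variation is controlled by \eqref{eq:phi1a}. Because $\int_0^T\|\bfu(s)\|_{L^2_x}^2\,\dif s$ is $\p$-a.s.\ finite (it agrees, via pathwise uniqueness and the first Theorem \ref{thm:inc2d}, with the unconditional $L^2$-energy of the weak pathwise solution), applying Itô to a suitably concave function of $y$, such as $\log(1+y)$, tames the quadratic drift $c\|\bfu\|_{L^2_x}^2 y^2$ to a contribution that is linear in $\log(1+y)$ with an a.s.\ integrable coefficient. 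A pathwise Gronwall together with BDG-based control of $M$ then yields $\p\bigl(\tau_R<T\bigr)\to 0$ as $R\to\infty$ on any finite horizon $T$, so that $\mathfrak t=\infty$ $\p$-a.s.; the detailed execution follows the lines of \cite{GlZi,Mi}. Combined with the pathwise uniqueness established above, this delivers the unique global maximal strong pathwise solution asserted by the theorem.
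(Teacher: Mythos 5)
The paper does not actually prove this theorem: it is imported verbatim from \cite{Mi} (see also \cite{GlZi}), so there is no in-paper proof to compare against; the closest in-house material is the Galerkin computation in the proof of Lemma \ref{lem:reg}. Your sketch follows the standard construction of those references and is essentially sound: Galerkin approximation with the Stokes eigenbasis, the unconditional $L^2$ energy bound, the $H^1$ estimate in which the convective term is absorbed via Ladyzhenskaya and Stokes regularity, pathwise uniqueness at the $L^2$ level, and a Gy\"ongy--Krylov upgrade. Two remarks. First, a methodological difference from the paper: rather than stopping the Galerkin system at $\tau_R^N$, the paper (following \cite{Mi}) works with the globally-in-time truncated equation \eqref{eq:trun}, where the convective term carries the cutoff $\zeta_R(\|\nabla\bfu^R\|_{L^2_x})$; this keeps the approximations defined on all of $[0,T]$ and avoids manipulating stopping times before the limit $N\to\infty$, at the price of having to identify $\bfu^R(\cdot\wedge\mathfrak t_R)=\bfu(\cdot\wedge\mathfrak t_R)$ afterwards. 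Your stopping-time variant is workable but requires more care with adaptedness and with passing $\tau_R^N\to\tau_R$ in the limit. Second, in the globality step your stated reason is imprecise: finiteness of $\int_0^T\|\bfu\|_{L^2_x}^2\,\dif s$ alone does not tame the drift $c\|\bfu\|_{L^2_x}^2 y^2$; what closes the Gronwall argument is rewriting it as $\bigl(c\|\bfu\|_{L^2_x}^2\|\nabla\bfu\|_{L^2_x}^2\bigr)\,y$ and using that $\sup_t\|\bfu\|_{L^2_x}^2<\infty$ and $\int_0^T\|\nabla\bfu\|_{L^2_x}^2\,\dif s<\infty$ hold $\p$-a.s.\ by \eqref{eq:W12}, so the coefficient of $y$ is a.s.\ integrable in time. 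You should also make explicit that bounding the It\^o correction by \eqref{eq:phi1a} uses the structural hypothesis $\Phi(\bfu)\in L_2(\mathfrak U;W^{1,2}_{0,\Div}(\mt,\R^2))$ (so that $\A^{1/2}$ can be moved onto the noise), which is exactly the point the paper flags as unavoidable in the Dirichlet setting; with these adjustments the argument is the one carried out in detail in \cite{GlZi,Mi}.
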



\subsection{Finite elements}
We work with a standard finite element set-up for incompressible fluid mechanics; see e.g. \cite{GR}.
We denote by $\mathscr{T}_h$ a quasi-uniform subdivision  \cite{brenner1} of $\mt$ into triangles of maximal diameter $h>0$.   
For $K\subset \mt$ and $\ell\in \setN _0$ we denote by
$\mathscr{P}_\ell(K)$ the polynomials on $K$ of degree less than or equal
to $\ell$. 
Let us
characterize the finite element spaces $V^h(\mt,\R^2)$ and $P^h(\mt)$ as
\begin{align}\label{def:Vh}
  V^h(\mt,\R^2)&:= \set{\bfv_h \in W^{1,2}_0(\mt,\R^2)\,:\, \bfv_h|_{K}
    \in (\mathscr{P}_i(K))^2\quad\forall K\in \mathscr T_h} \qquad  (i\geq 1),\\ \label{def:Ph}
P^h(\mt)&:=\set{\pi_h \in L^{2}(\mt)/\R\,:\, \pi_h|_{K}
    \in \mathscr{P}_j(K)\quad\forall K\in \mathscr T_h} \qquad  (j \geq 0).
\end{align}
In order to guarantee stability of our approximation we relate $V^h(\mt,\R^2)$ and $P^h(\mt)$ by the discrete inf-sup condition, {\em i.e.}, there exists a positive constant
$C$ not depending on $h$ such that 
\begin{align*}
\sup_{\bfv_h\in V^h(\mt,\R^2)} \frac{\int_{\mt}\Div\bfv_h\,\pi_h\dx}{\|\nabla\bfv_h\|_{L^2_x}}\geq\,C\,\|\pi_h\|_{L^2_x}\quad\,\forall\pi_h\in P^h(\mt) \, .
\end{align*}
A well-known class of inf-sup stable pairings are the `conforming Stokes elements', with the simplest choice  $i=2$ in (\ref{def:Vh}) and $j=0$ in (\ref{def:Ph}); see {\em e.g.} \cite[Ch. 6]{brezzi1} or \cite[Rem. 3.4]{HR} for further admissible examples of pairings.

We define the space of discretely solenoidal finite element functions by 
\begin{align*}
  V^h_{\Div}(\mt,\R^2)&:= \bigg\{\bfv_h\in V^h(\mt,\R^2):\,\,\int_{\mt}\Div\bfv_h\,\,\pi_h\dx=0\quad\forall\pi_h\in P^h(\mt)\bigg\}.
\end{align*}
Let $\Pi_h:L^2(\mt,\R^2)\rightarrow V_{\Div}^h(\mt,\R^2)$ be the $L^2(\mt,\R^2)$-orthogonal projection onto $V_{\Div}^h(\mt,\R^2)$. The following results concerning the approximability of $\Pi_h$ are well-known  (see, for instance \cite[Lemma 4.3]{HR}): there is $c>0$ independent of $h$ such that we have
  \begin{align}
    \label{eq:stab}
 \|\bfv-\Pi_h \bfv\|_{L^2_x}+ h\|\nabla\bfv-\nabla\Pi_h \bfv\|_{L^2_x} &\leq
    \,c\,h \|\nabla
      \bfv\|_{L^2_x}
  \end{align}
for all $\bfv\in W^{1,2}_{0,\Div}(\mt,\R^2)$; moreover, the arguments in \cite[Section 4]{HR} together with standard interpolations arguments (see \emph{e.g.} \cite[Lemma A.2]{GR}) also imply for $\beta\in(0,1]$ that
  \begin{align}
    \label{eq:stab'}
\|\bfv-\Pi_h \bfv\|_{L^2_x}+  h\|\nabla\bfv-\nabla\Pi_h \bfv\|_{L^2_x} &\leq
    \,c\,h^{1+\beta} \|
      \bfv\|_{W^{1+\beta,2}_x}
  \end{align}
for all $\bfv\in W^{1+\beta,2}\cap W^{1,2}_{0,\Div}(\mt,\R^2)$. Similarly, if $\Pi_h^\pi:L^2(\mt)/\R\rightarrow P^h(\mt)$ denotes the $L^2(\mt)$-orthogonal projection onto $P^h(\mt)$, we have
\begin{align}
\label{eq:stabpi}
 \|p-\Pi_h^\pi p\|^2_{L^2_x} &\leq
    \,c h\, \|\nabla
      p\|_{L^2_x}
\end{align}
for all $p\in W^{1,2}(\mt)/\R$.

\section{Regularity of solutions}
In this section we analyse the regularity of the continuous solution as well as the associated pressure function. For various purposes we need the Helmholtz-projection $\mathcal P:L^p(\mt,\R^2)\rightarrow L^2_{\Div}(\mt,\R^2)$, for $1<p<\infty$, given by
\begin{align}\label{helmholz}
\mathcal P\bfphi:=\bfphi-\nabla\Delta_{\mathcal O}^{-1}\Div\bfphi.
\end{align}
Here $\Delta^{-1}_{\mathcal O}\Div$ is the solution operator to the equation
\begin{align*}
\Delta \mathfrak {h}=\Div\bfg\quad\text{in}\quad\mathcal O,\quad \nu_{\mathcal O}\cdot(\nabla \mathfrak h-\bfg)=0\quad\text{on}\quad\partial\mathcal O,
\end{align*}
where $\nu_{\mathcal O}$ denotes the unit normal of $\partial\mathcal O$.
Note that $\nabla\Delta^{-1}_{\mt}\Div$ satisfies (since $\partial\mt$ was assumed to be sufficiently smooth)
\begin{align}
\label{eq:Delta3}
\nabla\Delta^{-1}_{\mt}\Div&:W^{r,p}(\mt,\R^2)\rightarrow W^{r,p}(\mt,\R^2),
\end{align}
for all $p\in(1,\infty)$ and all $r\in\N$, where $W^{0,p}(\mt,\R^2)=L^p(\mt,\R^2)$; see \cite{ADN} for the case $r\in\N$ and \cite[Chapter IV]{Ga} for the case $r=0$. Clearly, \eqref{eq:Delta3} transfers to $\mathcal P$.

With the help of the Helmholtz projection we can define the Stokes operator as
\begin{align}\label{eq:A}
 \mathcal A:=\mathcal P\Delta:W^{2,p}\cap W^{1,p}_{0,\Div}(\mt,\R^2)\rightarrow L^p_{\Div}(\mt,\R^2).
 \end{align}
 Due to well-known estimates for the Stokes system there is $c>0$ such that
 \begin{align}\label{eq:Akp}
 \|\bfu\|_{W^{r+2,p}_x}\leq\,c\,\|\A \bfu\|_{W^{r,p}_x},\quad \bfu\in W^{r+2,p}\cap W^{1,p}_{0,\Div}(\mt,\R^2),
 \end{align}
for all $p\in(1,\infty)$ and all $r\in\N_0$, see, \emph{e.g.}, \cite[Thm. IV. 6.1.]{Ga}, which uses sufficient smoothness of $\partial\mt$.
 Moreover, there is a system of eigenfunctions to the Stokes operator
$(\bfu_k)\subset W^{1,2}_{0,\Div}(\mt,\R^2)$ with strictly positive
eigenvalues $(\lambda_k)$ such that $\lambda_k\rightarrow\infty$ as $k\rightarrow\infty$. It is possible to choose the $\bfu_k$'s such that the system
 $(\bfu_k)$ is orthonormal in $L^2(\mt,\R^2)$ and orthogonal in $W^{1,2}_0(\mt,\R^2)$. Finally, we can assume that the $\bfu_k$'s are sufficiently smooth due to the assumed smoothness of $\partial\mt$. Since $\A$ is positive, its root $\A^{1/2}$ is well-defined with domain $W^{1,p}_{0,\Div}(\Omega,\R^2)$, and we have
\begin{align}\label{eq:sorootcont1}
&\|\nabla\bfu\|_{L^p_x}\leq c\big\|\A^{1/2}\bfu\big\|_{L^p_x}\leq C\|\nabla\bfu\|_{L^p_x},\quad \bfu\in W^{1,p}_{0,\Div}(\mt,\R^2),\\
&\int_\mt \A^{1/2}\bfu\cdot\bfw\dx=\int_{\mt}\bfu\cdot \A^{1/2} \bfw\dx,\quad\bfu\in W^{1,p}_{0,\Div}(\mt,\R^2),\,\,\bfw\in W^{1,p'}_{0,\Div}(\mt,\R^2),\label{eq:sorootcont2}
\end{align}
where $c,C>0$; cf.~\cite{GaSiSo}.

\subsection{Estimates for the continuous solution}
In this section we derive crucial estimates for the
maximal strong pathwise solution from Definition \ref{def:maxsol}, which hold up to the stopping time $\mathfrak t_R$. Here $R>0$ is a fixed truncation parameter and $T>0$ an arbitrary but fixed time.
\begin{lemma}\label{lem:reg}
Let $(\Omega,\mf,(\mf_t)_{t\geq0},\prst)$ be a given stochastic basis with a complete right-con\-ti\-nuous filtration and an $(\mf_t)$-cylindrical Wiener process $W$.
 \begin{enumerate}
\item[(a)] Assume that $\bfu_0\in L^r(\Omega,L^{2}_{\Div}(\mt,\R^2))$ for some $r\geq2$ and that $\Phi$ satisfies \eqref{eq:phi0}. Then we have
\begin{align}\label{eq:W12}
\E\bigg[\bigg(\sup_{0\leq t\leq T}\|\bfu(t)\|_{L^2_x}^2+\int_0^{T}\|\nabla\bfu\|_{L^2_x}^2\dt\bigg)^{\frac{r}{2}}\bigg]\leq\,c\,\E\Big[1+\|\bfu_0\|_{L^2_x}^r\Big],
\end{align}
where $\bfu$ is the weak pathwise solution to \eqref{eq:SNS}; cf. Definition \ref{def:inc2d}.
\item[(b)] Assume that $\bfu_0\in L^r(\Omega,W^{1,2}_{0,\Div}(\mt,\R^2))$ for some $r\geq2$ and that $\Phi$ satisfies \eqref{eq:phi0}--\eqref{eq:phi1b}. Then we have
\begin{align}\label{eq:W22}\begin{aligned}
\E\bigg[\bigg(\sup_{0\leq t\leq T}\|\bfu(t\wedge \mathfrak t_R)\|^2_{W^{1,2}_x}&+\int_0^{T\wedge \mathfrak t_R}\|\bfu\|^2_{W^{2,2}_x}\dt\bigg)^{\frac{r}{2}}\bigg]\leq\,cR^{3r}\,\E\Big[1+\|\bfu_0\|_{W^{1,2}_x}^r\Big],
\end{aligned}
\end{align}
where $(\bfu,(\mathfrak t_R)_{R\in\N},\mathfrak t)$ is the maximal strong pathwise solution to \eqref{eq:SNS}; cf. Definition \ref{def:maxsol}.
\item[(c)] Assume that $\bfu_0\in L^r(\Omega,W^{2,2}(\mt,\R^2))\cap L^{5r}(\Omega,W^{1,2}_{0,\Div}(\mt,\R^2))$ for some $r\geq2$ and that \eqref{eq:phi0}--\eqref{eq:phi2b} holds. Then we have for all $\beta<1$
\begin{align}\label{eq:W32}
\begin{aligned}
\E\bigg[\bigg(\sup_{0\leq t\leq T}\|\bfu(t\wedge \mathfrak t_R)\|_{W^{1+\beta}_x}^2&+\int_0^{T\wedge \mathfrak t_R}\|\bfu\|^2_{W^{2+\beta,2}_x}\dt\bigg)^{\frac{r}{2}}\bigg]\\&\leq\,cR^{5r}\,\E\Big[1+\|\bfu_0\|_{W^{2,2}_x}^r+\|\bfu_0\|_{W^{1,2}_x}^{2r}\Big],
\end{aligned}
\end{align}
where $(\bfu,(\mathfrak t_R)_{R\in\N},\mathfrak t)$ is the maximal strong pathwise solution to \eqref{eq:SNS}; cf. Definition \ref{def:maxsol}.
\end{enumerate}
Here $c=c(r,T,\beta)>0$ is independent of $R$.
\end{lemma}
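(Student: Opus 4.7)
The plan is to apply It\^o's formula to an appropriate norm of $\bfu$ in each case, estimate the nonlinearity via standard two-dimensional Sobolev interpolation, and close the argument by Burkholder--Davis--Gundy (BDG) together with Young's inequality and Gronwall. In (b) and (c) the stopping time $\mathfrak t_R$ compensates for the fact that $\int_{\mt}(\bfu\cdot\nabla)\bfu\cdot\Delta\bfu\dx$ no longer vanishes in the Dirichlet case: the nonlinear term will be bounded pointwise in time by an $R$-dependent multiple of the dissipation, which is then absorbed by Young.

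For (a) I apply It\^o's formula to $\|\bfu\|_{L^2_x}^2$. The convective term vanishes thanks to $\Div\bfu=0$ and $\bfu|_{\partial\mt}=0$, and the It\^o correction $\|\Phi(\bfu)\|_{L_2(\mathfrak U;L^2_x)}^2$ is controlled by \eqref{eq:phi0}. The conclusion follows after taking the $r/2$-th power, invoking BDG with a Young step to absorb $\tfrac{1}{2}\E\sup_t\|\bfu\|_{L^2_x}^r$, and applying Gronwall; no stopping time is needed.

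For (b) I apply It\^o to $\|\mathcal A^{1/2}\bfu\|_{L^2_x}^2\simeq\|\nabla\bfu\|_{L^2_x}^2$ (using \eqref{eq:sorootcont1}), justified rigorously through a Galerkin scheme based on the eigenfunctions of $\mathcal A$. The dissipation is $\mu\|\mathcal A\bfu\|_{L^2_x}^2\simeq\|\bfu\|_{W^{2,2}_x}^2$ by \eqref{eq:Akp}. The convective term is estimated via the 2D Ladyzhenskaya and Gagliardo--Nirenberg inequalities:
\begin{equation*}
\Big|\int_{\mt}(\bfu\cdot\nabla)\bfu\cdot\mathcal A\bfu\dx\Big|\le c\|\bfu\|_{L^4_x}\|\nabla\bfu\|_{L^4_x}\|\mathcal A\bfu\|_{L^2_x}\le c\|\bfu\|_{L^2_x}^{1/2}\|\nabla\bfu\|_{L^2_x}\|\bfu\|_{W^{2,2}_x}^{3/2}.
\end{equation*}
On $[0,\mathfrak t_R]$ we have $\|\bfu\|_{W^{1,2}_x}\le R$, so Young gives $cR^{3/2}\|\bfu\|_{W^{2,2}_x}^{3/2}\le\tfrac{\mu}{2}\|\bfu\|_{W^{2,2}_x}^2+cR^6$, absorbing half of the dissipation; the It\^o correction is bounded by $c(1+R^2)$ via \eqref{eq:phi1a}. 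Taking the $r/2$-th moment, applying BDG with Young to absorb the supremum, and invoking Gronwall yields the announced factor $R^{3r}$.

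For (c) the same scheme is applied to the fractional functional $\|\mathcal A^{(1+\beta)/2}\bfu\|_{L^2_x}^2\simeq\|\bfu\|_{W^{1+\beta,2}_x}^2$, producing the dissipation $\|\bfu\|_{W^{2+\beta,2}_x}^2$; the It\^o correction is controlled by interpolation between \eqref{eq:phi1a} and \eqref{eq:phi2a} together with \eqref{eq:phi2b}. The main obstacle is the convective term $\langle(\bfu\cdot\nabla)\bfu,\mathcal A^{1+\beta}\bfu\rangle_{L^2_x}$, which by duality and a 2D Kato--Ponce-type estimate $\|\bfu\cdot\nabla\bfu\|_{W^{\beta,2}_x}\lesssim\|\bfu\|_{W^{1+\beta,2}_x}\|\bfu\|_{W^{2,2}_x}$ (valid for $\beta<1$) reduces, after Young's absorption of the dissipation, to a term of schematic form $c\|\bfu\|_{W^{2,2}_x}^2\|\bfu\|_{W^{1+\beta,2}_x}^2$. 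Plugging in the $W^{2,2}_x$-estimate from (b) and applying Gronwall yields the $R^{5r}$ prefactor; the higher integrability assumption $\bfu_0\in L^{5r}(\Omega; W^{1,2}_{0,\Div})$ is precisely what controls the exponential integrating factor $\exp(c\int_0^{T\wedge\mathfrak t_R}\|\bfu\|_{W^{2,2}_x}^2\dt)$ via the high moments of $\int\|\bfu\|_{W^{2,2}_x}^2\dt$ supplied by part (b). The restriction $\beta<1$ reflects that $\Phi(\bfu)$ is only assumed to lie in $W^{2,2}_x$ without higher-order boundary compatibility, so the stochastic convolution cannot be controlled in $W^{2,2}_x$ uniformly in $t$.
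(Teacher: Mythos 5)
Parts (a) and (b) of your proposal follow the paper's route. For (a) it is the same standard It\^o argument. For (b) the paper does not stop the equation directly but works with the globally defined truncated system \eqref{eq:trun} (convective term multiplied by $\zeta_R(\|\nabla\bfu^R\|_{L^2_x})$) and a Stokes--eigenfunction Galerkin scheme, applying It\^o to $(\bfu^{R,N},\A\bfu^{R,N})_{L^2_x}$ and then using $\bfu^R(\cdot\wedge\mathfrak t_R)=\bfu(\cdot\wedge\mathfrak t_R)$; your Ladyshenskaya/Young estimate of the convective term and the treatment of the It\^o correction via \eqref{eq:phi1a} and \eqref{eq:sorootcont2} are exactly the paper's, so this part is fine modulo the (standard) care needed to justify It\^o up to a stopping time.

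Part (c) is where your argument has genuine gaps, and it is also where the paper takes a completely different route (mild formulation $\bfu^R=\bfY^R+\bfZ^R$, maximal regularity for the deterministic Stokes problem with forcing $\bfg_R=\zeta_R\,\mathcal P[(\bfu^R\cdot\nabla)\bfu^R]$ in mixed space--time norms, and a separate $W^{2,2}$-estimate for the stochastic convolution). Two concrete problems with your fractional energy estimate. First, pairing $(\bfu\cdot\nabla)\bfu$ with $\A^{1+\beta}\bfu$ and redistributing fractional powers of $\A$ requires $\mathcal P[(\bfu\cdot\nabla)\bfu]$ to lie in $D(\A^{\beta/2})$; for $\beta>1/2$ this domain encodes the Dirichlet boundary condition, and the Helmholtz projection destroys the zero trace of the convective term --- this is precisely the obstruction the paper points out in step ($\mathbf{c}_5$) (``the convective term $\bfg_R$ does not lie in the domain of the Stokes operator since it does not necessarily have a zero trace''), and it is the reason the paper abandons the energy method here. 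Second, and decisively, your Gronwall closure needs finite moments of $\exp\big(c\int_0^{T\wedge\mathfrak t_R}\|\bfu\|_{W^{2,2}_x}^2\dt\big)$. Part (b) supplies only polynomial moments of $\int_0^{T\wedge\mathfrak t_R}\|\bfu\|_{W^{2,2}_x}^2\dt$ of each fixed finite order $r$, with constants $c(r)R^{3r}$ that grow in $r$; no finite-order moment assumption on $\bfu_0$ (in particular not $L^{5r}$) yields exponential integrability, so the integrating factor is not controlled. Even if it were, it would produce a bound of size $e^{cR^{6}}$ rather than the polynomial $R^{5r}$ asserted in \eqref{eq:W32}, so the stated conclusion cannot come out of this argument. (A smaller point: your closing explanation of the restriction $\beta<1$ inverts the actual bottleneck --- the paper controls the stochastic convolution $\bfZ^R$ in $L^\infty W^{2,2}_x\cap L^2W^{3,2}_x$; it is the deterministic convolution against the convective term, whose temporal regularity is capped at $\sigma/2<1/2$ by the Wiener process and which lacks boundary compatibility, that limits the spatial gain to $2+\beta$ derivatives.)
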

\begin{proof}
{\bf Part (a)} is the standard a priori estimate, which is a consequence of applying It\^{o}'s formula to $t\mapsto \|\bfu\|_{L^2_x}^2$.

For {\bf part (b)} we follow \cite{Mi}, where the solution to a truncated problem is considered. For $R>1$ and $\zeta\in C_c^\infty([0,1))$ with $0\leq \zeta\leq 1$ and $\zeta=1$ in $[0,1]$ we set $\zeta_R:=\zeta(R^{-1}\cdot)$ . 
Similar to Definition \ref{def:inc2d} we seek an
$(\mf_t)$-adapted stochastic process $\bfu^R$ with
$$\bfu^R \in C([0,T];L^2_{\diver}(\tor,\R^2))\cap L^2(0,T; W^{1,2}_{0,\diver}(\tor,\R^2))\quad\text{$\p$-a.s.}$$
such that
\begin{align}\label{eq:trun}
\begin{aligned}
\int_{\mt}\bfu^R(t)\cdot\bfvarphi\dx&=\int_{\mt}\bfu_0\cdot\bfvarphi\dx
+\int_0^t  \zeta_R(\|\nabla\bfu^{R}\|_{L^2_x}) \int_{\mt}\bfu^{R}\otimes\bfu^{R} :\nabla\bfphi\dx\,\dif s\\&-\mu\int_0^t\int_{\mt}\nabla\bfu^R:\nabla\bfvarphi\dx\,\dif s+\int_0^t\int_{\mt}\Phi(\bfu^R)\cdot\bfvarphi\dx\,\dif W
\end{aligned}
\end{align}
holds $\p$-a.s. for all $\bfvarphi\in W^{1,2}_{0,\diver}(\mt,\R^2)$ and all $t\in[0,T]$. Arguing as in
\cite[Lemma 3.7]{Mi} one can show that a unique global strong pathwise solution to \eqref{eq:trun} exists in the class $C([0,T];W^{1,2}_{0,\Div}(\mt,\R^2))$,\footnote{Different from the solution obtained in Theorem \ref{thm:inc2d} it can be constructed for any given deterministic $T>0$.}
 and that it satisfies
\begin{align}\label{eq:W22R}\begin{aligned}
\E\bigg[\sup_{0\leq t\leq T}\|\nabla\bfu^R(t)\|_{L^2_x}^2\dx&+\int_{0}^T\|\nabla^2\bfu^R\|_{L^2_x}^2\dt\bigg]\leq\,c(r,R,T).
\end{aligned}
\end{align}
 The proof of \eqref{eq:W22R} in \cite{Mi} is based on a Galerkin approximation which we mimick now in order to prove \eqref{eq:W22} and \eqref{eq:W32}. 
 
\textbf{1) Galerkin approximation.} Let $(\bfu_k)\subset W^{1,2}_{0,\Div}(\mt,\R^2)$ be
a system of eigenfunctions to the Stokes operator, cf.~\eqref{eq:A}.
  For $N\in\N$ let $\mathbb H^N:=\mathrm{span}\{\bfu_1,\dots,\bfu_N\}$, and consider the unique
 solution $\bfu^{R,N}$ to
 \begin{align}\label{eq:trunN}
\begin{aligned}
\int_{\mt}\bfu^{R,N}(t)\cdot\bfvarphi\dx&=\int_{\mt}\bfu_0\cdot\bfvarphi\dx
+\int_0^t\zeta_R(\|\nabla\bfu^{R,N}\|_{L^2_x})\int_{\mt}\bfu^{R,N}\otimes\bfu^{R,N} :\nabla\bfphi\dx\,\dif s\\&-\mu\int_0^t\int_{\mt}\nabla\bfu^{R,N}:\nabla\bfvarphi\dx\,\dif s+\int_0^t\int_{\mt}\Phi(\bfu^{R,N})\cdot\bfvarphi\dx\,\dif W
\end{aligned}
\end{align}
for all $\bfphi\in \mathbb H^N$.
By $\mathcal P_N$ we denote the $L^2(\mt,\R^2)$-projection onto $\mathbb H^N$. Problem \eqref{eq:trunN}
can be written as a system of SDEs with Lipschitz-continuous coefficients. Hence it is clear that there is a unique strong solution, \emph{i.e.}, an $(\mathfrak F_t)$-adapted process defined on $(\Omega,\mathfrak F,\mathbb P)$ with values in $C([0,T];\mathbb H^N)$ and moments of order $r$. Arguing as in \cite[Prop. 3.2]{Mi} one can prove that
as $N\rightarrow\infty$
\begin{align}\label{lim:N}
\sup_{0\leq t\leq T}\|\bfu^R(t)-\bfu^{R,N}(t)\|_{L^2_x}^2&+\int_{0}^T\|\nabla(\bfu^R-\bfu^{R,N})\|_{L^2_x}^2\dxt\rightarrow 0
\end{align}
in probability. Applying It\^{o}'s formula to
$t\mapsto \|\bfu^{R,N}\|_{L^2_x}^2$ and using the cancellation of the convective term
 one can prove for $r\geq2$
\begin{align}\label{lim:N2}
\E\bigg[\bigg(\sup_{0\leq t\leq T}\|\bfu^{R,N}(t)\|_{L^2_x}^2+\int_{0}^T\|\nabla\bfu^{R,N}\|_{L^2_x}^2\dt\bigg)^{\frac{r}{2}}\bigg]\leq\,c\,\E\Big[1+\|\bfu_0\|_{L^2_x}^r\Big],
\end{align}
where $c=c(r,T)$ is independent of $N$ and $R$.

\textbf{2) Proof of \eqref{eq:W22}.} By construction we have $\mathcal A\bfu^{R,N}\in C([0,T];\mathbb H^N)$ $\p$-a.s. such that we can apply
It\^{o}'s formula to $t\mapsto (\bfu^{R,N}(t),\A \bfu^{R,N}(t))_{L^2_x}$ and use \eqref{eq:trunN}. This yields
using $\bfu^{R,N}|_{\partial\mt}=0$
\begin{align}
\nonumber
\|\nabla\bfu^{R,N}(t)\|_{L^2_x}^2&=- \big(\bfu^{R,N}(t),\Delta \bfu^{R,N}(t)\big)_{L^2_x}=- \big(\bfu^{R,N}(t),\A \bfu^{R,N}(t)\big)_{L^2_x}\\\nonumber&=\|\mathcal P_N\nabla\bfu_0\|^2_{^2_x}
+2\int_0^t   \zeta_R(\|\nabla\bfu^{R,N}\|_{L^2_x})\big((\bfu^{R,N}\cdot\nabla)\bfu^{R,N},\A\bfu^{R,N}\big)_{L^2_x}\,\dif s\\\label{eq:3005}
&\quad-2\mu\int_0^t\|\A\bfu^{R,N}\|_{L^2_x}^2\,\dif s+2\sum_{k=1}^N\int_0^t\big(\Phi(\bfu^{R,N})e_k,\A\bfu^{R,N}\big)_{L^2_x}\,\dif \beta_k\\
&\quad+\sum_{k=1}^N\lambda_k\int_0^t\big(\Phi(\bfu^{R,N})e_k,\bfu_k\big)_{L^2_x}^2\,\dif s\nonumber\\
&=:\mathrm{I}^N(t)+\dots+\mathrm{V}^N(t)\nonumber
\end{align}
$\mathbb P$-a.s. for all $t\in[0,T].$
We estimate now the terms
$\mathrm{II}^N$, $\mathrm{IV}^N$ and $\mathrm{V}^N$. First of all, we have by definition of $\zeta_R$
\begin{align*}
\mathrm{II}^N(t)
&\leq 2\int_0^t\zeta_R(\|\nabla\bfu^{R,N}\|_{L^2_x})\|\bfu^{R,N}\|_{L^4_x}\|\nabla\bfu^{R,N}\|_{L^4_x}\|\A\bfu^{R,N}\|_{L^2_x}\dif s\\
&\leq 2\int_0^t\zeta_R(\|\nabla\bfu^{R,N}\|_{L^2_x})\|\bfu^{R,N}\|_{L^2_x}^{\frac{1}{2}}\|\nabla\bfu^{R,N}\|_{L^2_x}\|\A\bfu^{R,N}\|^{\frac{3}{2}}_{L^2_x}\dif s\\
&\leq cR^{3/2}\int_0^t\|\A\bfu^{R,N}\|^{\frac{3}{2}}_{L^2_x}\dif s\leq \delta\int_0^t\|\A\bfu^{R,N}\|^{2}_{L^2_x}\dif s+c(\delta)R^6,
\end{align*}
where $\delta>0$ is arbitrary.
Moreover, 
we obtain by definition of $\bfu_k$ and
using \eqref{eq:sorootcont2} (and recalling that $\Phi(\bfu^{R,N})e_k\in W^{1,2}_{0,\Div}(\mt,\R^2)$ for all $k\in\N$ by assumption)
\begin{align*}
\mathrm{V}^N(t)&=\sum_{k=1}^N\int_0^t\big(\Phi(\bfu^{R,N})e_k,\sqrt{\lambda_k}\bfu_k\big)_{L^2_x}^2\,\dif s=\sum_{k=1}^N\int_0^t\big(\Phi(\bfu^{R,N})e_k, \A^{1/2}\bfu_k\big)^2\,\dif s
\\&=\sum_{k=1}^N\int_0^t\big(\mathcal A^{1/2}\Phi(\bfu^{R,N})e_k,\bfu_k\big)_{L^2_x}^2\,\dif s.
\end{align*}
Furthermore, since $\|\bfu_k\|_{L^2_x}=1$,
\begin{align*}
\mathrm{V}^N(t)&\leq\sum_{k\geq1}\int_0^t\|\mathcal A^{1/2}\Phi(\bfu^{R,N})e_k\|_{L^2_x}^2\|\bfu_k\|^2_{L^2_x}\,\dif s
\leq\,c\sum_{k\geq1}\int_0^t\|\nabla\Phi(\bfu^{R,N})e_k\|_{L^2_x}^2\,\dif s\\
&=c\int_0^t\|\Phi(\bfu^{R,N})\|^2_{L_2(\mathfrak U;W^{1,2}_x)}\,\dif s\leq\,c\int_0^t\big(1+\|\bfu^{R,N}\|^2_{W^{1,2}_x}\big)\,\dif s,
\end{align*}
using \eqref{eq:phi1a} in the last step.
The expectation of the right-hand side is bounded by \eqref{lim:N2}.
Finally, by Burkholder-Davis-Gundy inequality and  \eqref{eq:phi0},
\begin{align*}
\E\bigg[\bigg(\sup_{0\leq t\leq T}|\mathrm{IV}(t)|\bigg)^{\frac{r}{2}}\bigg]&\leq \E\bigg[\bigg(\sup_{0\leq t \leq T}\Big|\int_0^t\sum_{k=1}^N\big(\Phi(\cdot,\bfu^{R,N})e_k
,\mathcal A\bfu^{R,N}\big)_{L^2_x}\,\dd\beta_k\Big|\bigg)^{\frac{r}{2}}\bigg]\\
&\leq c\,\E\bigg[\bigg(\sum_{k\geq1}\int_0^T\big( \Phi(\cdot,\bfu^{R,N})e_k
\cdot\mathcal A\bfu^{R,N}\big)_{L^2_x}^2\dt\bigg)^{\frac r4}\bigg]\\
&\leq c\,\E\bigg[\bigg(\sum_{k\geq1}\int_0^T
\|\Phi_k(\bfu^{R,N})e_k\|_{L^2_x}^2\|\mathcal A\bfu^{R,N}\|^2_{L^2_x}\dt\bigg)^{\frac r4}\bigg]\\
&\leq c\,\E\bigg[\bigg(\int_0^T
\big(1+\|\bfu^{R,N}\|_{L^2_x}^2\big)\|\mathcal A\bfu^{R,N}\|^2_{L^2_x}\dt\bigg)^{\frac r4}\bigg]\\
&\leq c(\delta)\,\E\bigg[\bigg(1+\sup_{0\leq t\leq T}\|\bfu^{R,N}\|_{L^2_x}^2\bigg)^{\frac{r}{2}}\bigg]+ \delta\,\E\bigg[\bigg(\int_0^T\|\mathcal A\bfu^{R,N}\|_{L^2_x}^2\dt\bigg)^{\frac{r}{2}}\bigg]\\
&\leq c(\delta)+\delta\,\E\bigg[\bigg(\int_0^T\|\mathcal A\bfu^{R,N}\|_{L^2_x}^2\dt\bigg)^{\frac{r}{2}}\bigg]
\end{align*}
using \eqref{lim:N2}, where again $\delta>0$ is arbitrary.
Choosing $\delta$ small enough and using \eqref{eq:Akp}
we conclude that
\begin{align}\label{uniform:NR}
\begin{aligned}
\E\bigg[\bigg(\sup_{0\leq t\leq T}\int_{\mt}\|\nabla\bfu^{R,N}(t)\|_{L^2_x}^2&+\int_{0}^T\|\nabla^2\bfu^{R,N}\|_{L^2_x}^2\dt\bigg)^{\frac{r}{2}}\bigg]\\&\leq\,cR^{3r}\,\E\Big[1+\|\bfu_0\|_{W^{1,2}_x}^r\Big]\,,
\end{aligned}
\end{align}
uniformly in $N$. This implies that $(\bfu^{R,N})_{N\in\N}$ is a bounded sequence in the function space generated by the left-hand side of \eqref{uniform:NR}. After taking a subsequence we obtain a limit object $\bfu^R$ which is the unique global strong solution to \eqref{eq:trun} recalling
 \eqref{lim:N}. Furthermore, we can pass to the limit $N\rightarrow\infty$ and obtain a corresponding estimate for $\bfu^R$ due to lower semi-continuity of the involved functionals.
Since $\bfu^R(\cdot\wedge\mathfrak t_R)=\bfu(\cdot\wedge \mathfrak t_R)$ we obtain \eqref{eq:W22}.

\textbf{3) Proof of \eqref{eq:W32}.} The verification of {\bf part (c)} proceeds in two steps. In the first step we show an improved version of \eqref{uniform:NR}.
Applying It\^{o}'s formula to the mapping $t\mapsto \|\nabla\bfu^{R,N}(t)\|_{L^2_x}^2\big(\bfu^{R,N}(t),\A \bfu^{R,N}(t)\big)_{L^2_x}$,
equation \eqref{eq:3005} yields
\begin{align*}
\|\nabla\bfu^{R,N}(t)\|_{L^2_x}^4&=\|\mathcal P_N\nabla\bfu_0\|_{L^2_x}^4-4\mu\int_0^t\|\nabla\bfu^{R,N}\|_{L^2_x}^2\|\A\bfu^{R,N}\|_{L^2_x}^2\,\dif s\\&
\quad+4\int_0^t   \zeta_R(\|\nabla\bfu^{R,N}\|_{L^2_x})\|\nabla\bfu^{R,N}\|_{L^2_x}^2\big((\bfu^{R,N}\cdot\nabla)\bfu^{R,N},\A\bfu^{R,N}\big)_{L^2_x}\,\dif s\\&\quad+4\sum_{k=1}^N\int_0^t\|\nabla\bfu^{R,N}\|_{L^2_x}^2\big(\Phi(\bfu^{R,N})e_k,\A\bfu^{R,N}\big)_{L^2_x}\,\dif \beta_k\\
&\quad+2\sum_{k=1}^N\lambda_k\int_0^t\|\nabla\bfu^{R,N}\|_{L^2_x}^2\big(\Phi(\bfu^{R,N})e_k,\bfu_k\big)_{L^2_x}^2\,\dif s\\
&\quad+2\sum_{k=1}^N\int_0^t\big(\Phi(\cdot,\bfu^{R,N})e_k
,\mathcal A\bfu^{R,N}\big)_{L^2_x}^2\dt.
\end{align*}
Following now step by step the arguments from the proof of \eqref{uniform:NR} above we arrive at
\begin{align*}
\E\bigg[\bigg(\sup_{0\leq t\leq T}\|\nabla\bfu^{R,N}(t)\|_{L^2_x}^4&+\int_0^T\|\nabla\bfu^{R,N}\|_{L^2_x}^2\|\nabla^2\bfu^{R,N}\|_{L^2_x}^2\dt\bigg)^{\frac{r}{2}}\bigg]\\&\leq\,cR^{3r}\,\E\Big[1+\|\bfu_0\|_{W^{1,2}_x}^{2r}\Big].
\end{align*}
Again we can pass to the limit in $N$ obtaining
\begin{align}\label{uniform:NR'}
\begin{aligned}
\E\bigg[\bigg(\sup_{0\leq t\leq T}\|\nabla\bfu^{R}(t)\|_{L^2_x}^4&+\int_0^T\|\nabla\bfu^{R}\|_{L^2_x}^2\|\nabla^2\bfu^{R}\|_{L^2_x}^2\dt\bigg)^{\frac{r}{2}}\bigg]\\&\leq\,cR^{3r}\,\E\Big[1+\|\bfu_0\|_{W^{1,2}_x}^{2r}\Big].
\end{aligned}
\end{align}
Now we turn to the proof of  \eqref{eq:W32} stated in {\bf part (c)} for which we use
the mild formulation of \eqref{eq:trun}.

($\mathbf{c}_1$)
Due to the regularity proved in \eqref{uniform:NR} and \eqref{uniform:NR'}, \cite[Proposition
F.0.5, (i)]{PrRo} applies and we can write
\begin{align*}
\bfu^{R}(t)&=e^{-t\A}\bfu_0+\int_0^te^{-(t-s)\A}\bfg_R\,\dif s+\int_0^t e^{-(t-s)\A}\Phi(\bfu^{R})\,\dd W,\\
\text{where}\quad\bfg_R:&= \zeta_R(\|\nabla\bfu^{R}\|_{L^2_x})\mathcal P[(\bfu^{R}\cdot\nabla)\bfu^{R}].
\end{align*}
Here $(e^{-t\A})_{t\geq0}$ denotes the analytic semigroup on $L^2_{\Div}(\mt,\R^2)$ generated by the Stokes operator $\A$.
Setting
\begin{align*}
\bfY^{R}(t)&:=e^{-t\A}\bfu_0+\int_0^te^{-(t-s)\A} \bfg_R\,\dif s,\\
\bfZ^{R}(t)&:=\int_0^t e^{-(t-s)\A}\Phi(\bfu^{R})\,\dd W,
\end{align*}
we consider now the deterministic and stochastic contribution separately. We note that $\bfY^R$ is the unique solution to a deterministic Stokes problem with initial datum $\bfu_0$ and forcing $\bfg_R$, whereas $\bfZ^R$ solves a stochastic Stokes problem
with homogeneous initial datum and diffusion coefficient $\Phi(\bfu^R)$ -- both equipped with homogeneous Dirichlet boundary conditions.

By Ladyshenskaya's inequality we have
\begin{align*}
\E\bigg[\bigg(\int_0^T\|\bfg_R\|_{L^2_x}^2\dt\bigg)^{\frac{r}{2}}\bigg]&\leq\E\bigg[\bigg(\int_0^T\|\bfu^R\|_{L^4_x}^2\|\nabla\bfu^R\|_{L^4_x}^2\dt\bigg)^{\frac{r}{2}}\bigg]\\
&\leq\,c\,\E\bigg[\bigg(\int_0^T\|\bfu^R\|_{L^2_x}\|\nabla\bfu^R\|_{L^2_x}^2\|\nabla^2\bfu^R\|_{L^2_x}\dt\bigg)^{\frac{r}{2}}\bigg]\\
&\leq\,cR^{3r}\,\E\Big[1+\|\bfu_0\|_{W^{1,2}_x}^{2r}\Big],
\end{align*}
where we used \eqref{uniform:NR'} in the last step.

($\mathbf{c}_2$) Interpolating $W^{1/2,2}(0,T;W^{1,2}(\mt,\R^2))$ between
$W^{1,2}(0,T;L^2(\mt,\R^2))$ and $L^2(0,T;W^{2,2}(\mt,\R^2))$ and applying $\mathbb P$-a.s. classical estimates for the Stokes system yields
\begin{align}\label{eq:joerna}
\begin{aligned}
\E\bigg[\bigg(\|\bfY^R\|_{W^{1/2}(0,T;W^{1,2}_x)}^2\dt\bigg)^{\frac{r}{2}}\bigg]&\leq\E\bigg[\bigg(\|\bfY^R\|_{W^{1,2}(0,T;L^{2}_x)}^2+\|\bfY^R\|_{L^{2}(0,T;W^{2,2}_x)}^2\dt\bigg)^{\frac{r}{2}}\bigg]\\
&\leq\,c\,\E\bigg[\bigg(\int_0^T\|\bfg_R\|_{L^2_x}^2\dt\bigg)^{\frac{r}{2}}\bigg]\leq\,cR^{3r}\,\E\Big[1+\|\bfu_0\|_{W^{1,2}_x}^{2r}\Big].
\end{aligned}
\end{align}
($\mathbf{c}_3$) For $\bfZ^R$ we apply the recent results from \cite[Theorems 25 and 28]{Joern} proving for any $\sigma<1$
\begin{align}\label{eq:joernb}
\begin{aligned}
\E\bigg[\|\bfZ^R\|^2_{C^{\sigma/2}([0,T];L^2_x)}+\|\bfZ^R\|_{W^{\sigma/2,2}(0,T;W^{1,2}_x)}^2\bigg]&\leq\,c\,\E\bigg[1+\sup_{0\leq t\leq T}\|\bfu^R\|^{4}_{W^{1,2}_x}\bigg]\\
&\leq\,c\,\E\bigg[1+\|\bfu_0\|^{4}_{W^{1,2}_x}\bigg]
\end{aligned}
\end{align}
using also \eqref{eq:phi1a} and \eqref{uniform:NR'}. Combining \eqref{eq:joerna} and \eqref{eq:joernb} and recalling that $\bfu^R$ is the sum of $\bfY^R$ and $\bfZ^R$ gives
\begin{align}\label{eq:joern}
\begin{aligned}
\E\bigg[\|\bfu^R\|^2_{C^{\sigma/2}([0,T];L^2_x)}+\|\bfu^R\|_{W^{\sigma/2,2}(0,T;W^{1,2}_x)}^2\bigg]
&\leq\,c\,\E\bigg[1+\|\bfu_0\|^{4}_{W^{1,2}_x}\bigg].
\end{aligned}
\end{align}

($\mathbf{c}_4$) Due to our assumption on the noise from \eqref{eq:phi2a} we know that $\Phi(\bfu^R)e_k$, with $k\in\N$, belongs to the domain of the Stokes operator
such that we can write
\begin{align*}
\A\bfZ^{R}(t)=\int_0^t e^{-(t-s)\A}\A\Phi(\bfu^{R})\,\dd W.
\end{align*}
We conclude that $\A\bfZ^R$ is the unique weak pathwise solution to the stochastic Stokes problem with zero initial datum, homogeneous boundary conditions and diffusion coefficient $\A\Phi(\bfu^R)$. It is standard to derive for $r\geq2$ the estimate
\begin{align*}\E\bigg[\bigg(\sup_{0\leq t\leq T}\|\A\bfZ^{R}\|^2_{L^{2}_x}&+\int_0^T\|\nabla\A\bfZ^R\|^2_{L^2_x}\,\dif s\bigg)^{\frac{r}{2}}\bigg]\\
&\leq\,c\,\E\bigg[\bigg(\int_0^T\|\A\Phi(\bfu^{R})\|^2_{L_2(\mathfrak U;L^2_x)}\,\dd s\bigg)^{\frac{r}{2}}\bigg]\\
&\leq\,c\,\E\bigg[\bigg(\int_0^T\|\Phi(\bfu^{R})\|^2_{L_2(\mathfrak U;W^{2,2}_x)}\,\dd s\bigg)^{\frac{r}{2}}\bigg]\\
&\leq\,c\bigg[\bigg(\int_0^{t}\big(1+\|\bfu^{R}\|_{W^{1,2}_x}^2\|\bfu^{R}\|^2_{W^{2,2}_x}+\|\bfu^{R}\|^2_{W^{2,2}_x}\big)\,\dif s\bigg)^{\frac{r}{2}}\bigg],
\end{align*}
applying It\^{o}'s formula to $t\mapsto \|\A\bfu^R\|_{L^2_x}^2$ and using Burkholder-Davis-Gundy inequality (and \eqref{eq:phi2a} in the last step). The properties of the Stokes operator from \eqref{eq:Akp} yield
\begin{align*}\E\bigg[\bigg(\sup_{0\leq t\leq T}\|\bfZ^{R}\|^2_{W^{2,2}_x}&+\int_0^T\|\bfZ^R\|^2_{W^{3,2}_x}\,\dif s\bigg)^{\frac{r}{2}}\bigg]\\
&\leq\,c\bigg[\bigg(\int_0^{t}\big(1+\|\bfu^{R}\|_{W^{1,2}_x}^2\|\bfu^{R}\|^2_{W^{2,2}_x}+\|\bfu^{R}\|^2_{W^{2,2}_x}\big)\,\dif s\bigg)^{\frac{r}{2}}\bigg].
\end{align*}
($\mathbf{c}_5$) To sharpen the estimates for $\bfY^R$ is slightly more involved as the convective term $\bfg_R$ does not lie in the domain of the Stokes operator since it does not necessarily have a zero trace.
We can choose $p<2$ such that the embedding
$W^{1,p}(\mt)\hookrightarrow W^{\sigma,2}(\mt)$ holds.
We obtain by continuity of $\mathcal P$, cf.~\eqref{eq:Delta3},
\begin{align*}
\|\bfg_R\|_{W^{\sigma,2}_x}&\leq\,c\,\|\bfg_R\|_{W^{1,p}_x}\leq\,c
\|(\bfu^{R}\cdot\nabla)\bfu^{R}\|_{W^{1,p}_x}\\
&\leq\,c\|\nabla\bfu^R\|_{L^{2p}_x}^2+c\|\bfu^R\|_{L^q_x}\|\nabla^2\bfu_R\|_{L^2_x}\leq\,c\|\nabla\bfu^R\|_{L^2_x}\|\nabla^2\bfu_R\|_{L^2_x},
\end{align*}
where we used H\"older's inequality with exponents $2/p$ and $q:=2/(2-p)$
as well as Sobolev's embedding $W^{1,2}(\mt,\R^2)\hookrightarrow L^q(\mt,\R^2)$ and Ladyshenskaya's inequality. By \eqref{uniform:NR'} we conclude that
\begin{align}\label{eq:1409a}
\bfg_R\in L^2(0,T;W^{\sigma,2}(\mt,\R^2))\quad\mathbb P\text{-a.s.}
\end{align}
We argue now similarly for the temporal regularity of order $\sigma/2$ obtaining for any $\sigma'\in(\sigma,1)$
\begin{align*}
\|\bfg_R\|_{W^{\sigma/2,p}(0,T;L^p_x)}^p&\leq \,c\int_0^T\int_0^T\frac{\|\bfu^R(t)\nabla\bfu^R(t)-\bfu^R(s)\nabla\bfu^R(s)\|_{L^p_x}^p}{|t-s|^{1+p\sigma/2}}\dt\,\dif s\\
&\leq \,c\int_0^T\int_0^T\bigg(\frac{\|\bfu^R(t)-\bfu^R(s)\|_{L^2_x}}{|t-s|^{\sigma'/2}}\|\nabla\bfu^R(t)\|_{L^q_x}\bigg)^p\frac{\dt\,\dif s}{|t-s|^{1+\frac{(\sigma-\sigma')p}{2}}}\\
&\quad+ \,c\int_0^T\int_0^T\bigg(\frac{\|\bfu^R(s)\|_{L^q_x}\|\nabla\bfu^R(t)-\nabla\bfu^R(s)\|_{L^2_x}}{|t-s|^{\sigma/2}}\bigg)^p\frac{\dt\,\dif s}{|t-s|}\\
&\leq \,c\|\bfu^R\|^p_{C^{\sigma'/2}([0,T];L^2_x)}\int_0^T\|\nabla\bfu^R(t)\|_{L^q_x}^p\dt\\
&\quad+ \,c\sup_{0\leq s\leq t}\|\bfu^R(s)\|_{L^q_x}^p\int_0^T\int_0^T\frac{\|\nabla\bfu^R(t)-\nabla\bfu^R(s)\|_{L^2_x}^p}{|t-s|^{1+p\sigma/2}}\dt\,\dif s\\
&\leq \,c\|\bfu^R\|^p_{C^{\sigma'/2}([0,T];L^2_x)}\int_0^T\big(1+\|\bfu^R(t)\|_{W^{2,2}_x}^2\big)\dt\\
&\quad+ \,c\sup_{0\leq s\leq t}\|\bfu^R(s)\|_{W^{1,2}_x}^p\|\bfu^R\|_{W^{\sigma/2,p}(0,T;W^{1,2}_x)}^p\\
&\leq\,c\bigg(\|\bfu^R\|^2_{C^{\sigma'/2}([0,T];L^2_x)}+\|\bfu^R\|_{W^{\sigma'/2,2}(0,T;W^{1,2}_x)}^2+1\bigg)\\
&\quad+c\bigg(\,\sup_{0\leq s\leq t}\|\bfu^R(s)\|_{W^{1,2}_x}^2+\int_0^T\|\bfu^R\|_{W^{2,2}_x}^2\dt\bigg)^q.
\end{align*}
The expectation of the right-hand side is bounded using \eqref{eq:W32} and \eqref{eq:joern}; in particular, for any $\sigma<1$
\begin{align}\label{eq:1409b}
\bfg_R\in W^{\sigma/2,2}(0,T;L^{2}(\mt,\R^2))\quad\mathbb P\text{-a.s.}
\end{align}
using the embedding decreasing the value of $\sigma$ and using $W^{\sigma/2,p}(0,T)\hookrightarrow W^{\sigma'/2,2}(0,T)$ for an appropriate choice of $\sigma>\sigma'$ and $p<2$.
By \eqref{eq:1409a} and \eqref{eq:1409b} classical results on the Stokes system (see \cite[Thm. 15]{Soa}) and interpolation yield
\begin{align*}
\bfY^R\in W^{1+\sigma/2}(0,T;L^{2}(\mt,\R^2))\cap  L^2(0,T;W^{2+\sigma,2}(\mt,\R^2))\quad\mathbb P\text{-a.s.} 
\end{align*}
and thus, again by interpolation and appropriate choice of $\sigma\in(\beta,1)$ and the embedding $W^{\alpha,2}(0,T)\hookrightarrow L^\infty(0,T)$ for $\alpha>1/2$,
\begin{align}\label{eq:0809b}
\bfY^R\in L^\infty(0,T;W^{1+\beta,2}(\mt,\R^2))\cap  L^2(0,T;W^{2+\beta,2}(\mt,\R^2))\quad\mathbb P\text{-a.s.} 
\end{align}
together with
\begin{align*}
\sup_{0\leq t\leq T}&\|\bfY^{R}\|_{W^{1+\beta,2}_x}^2+\int_0^T\|\bfY^{R}\|_{W^{2+\beta,2}_x}^2\,\dd s\\&\leq \,c\bigg[\|\bfu_0\|_{W^{1+\sigma,2}_x}^2+\|\bfg_R\|^2_{W^{\sigma/2,2}_t(L^2_x)}+\int_0^T\| \bfg_R\|_{W^{\sigma,2}_x}^2\,\dd s\bigg]\quad\mathbb P\text{-a.s.} 
\end{align*}
Combining the estimates for $\bfY^{R}$ and $\bfZ^{R}$, choosing $\kappa$ sufficiently small and
using \eqref{uniform:NR} and \eqref{uniform:NR'} we arrive at
\begin{align*}
\E\bigg[\bigg(\sup_{0\leq t\leq T}\|\bfu^{R}(t)\|_{W^{1+\sigma,2}_x}^2\dx&+\int_0^{T}\|\bfu^{R}\|^2_{W^{2+\sigma,2}_x}\dt\bigg)^{\frac{r}{2}}\bigg]\\&\leq\,cR^{5r}\,\E\Big[1+\|\bfu_0\|_{W^{2,2}_x}^r+\|\bfu_0\|_{W^{1,2}_x}^{2r}\Big].
\end{align*}
uniformly in $R$.
\end{proof}

\begin{remark}\label{rem:reg}
{\bf 1.}
It seems not possible to prove Lemma \ref{lem:reg} (c) for $\beta\geq1$, see \eqref{eq:0809b}. In fact, even for the deterministic Stokes system high regularity is only possible if the forcing is regular in space \emph{and} time or belongs to the domain of the Stokes operator. Since neither is true for the convective term $\mathcal P(\bfu\cdot\nabla)\bfu$ (its temporal regularity is restricted by that of the driving Wiener process) we conjecture that the spatial regularity from \ref{lem:reg} (c) is optimal. Interestingly, this is just enough to prove an optimal convergence rate for the discretisation of \eqref{eq:SNS} in Theorem \ref{thm:main}.\\
{\bf 2.} Using a recent result from \cite{Joern} we can show
that the gradient of velocity field and hence the convective
has a fractional time derivative of order $\beta/2<1/2$. This is optimal in view of the limited regularity of the driving Wiener process in the momentum equation. It is classical for deterministic parabolic equations (see \cite{Soa} for the Stokes equations and \cite{Sob} for the heat equation) that the solution gains
two spatial and one temporal derivatives compared to the right-hand side. Hence the regularity of the latter has to be measured in space and time with respect to the parabolic scaling; pure space regularity does not transfer unless additional assumptions are in place such that we can only hope for $2+\beta$ spatial derivatives.
\end{remark}

\subsection{Regularity of the pressure}\label{sec:stochpress}
Since we will be working with discretely diver\-gence-free function spaces in the finite-element analysis for \eqref{txdiscrpi} in Section \ref{sec:error}, it is inevitable to introduce the pressure function. 
Note that the strong formulation of the momentum equation
in \eqref{eq:mom} even allows test functions from the class $L^2_{\Div}(\mt,\R^2)$ (using a standard smooth approximation argument), \emph{i.e.}, functions which do not have zero traces on $\partial\mt$. Hence
for $\bfphi\in C^\infty_c(\mt,\R^2)$ we can insert $$\mathcal P\bfphi=\bfphi-\nabla\Delta_{\mathcal O}^{-1}\Div\bfphi$$
with the Helmholz projection $\mathcal P$; cf. \eqref{helmholz}. 
We obtain
\begin{align}
\nonumber
\int_{\mt}\bfu(t\wedge\mathfrak t_R)\cdot\bfvarphi\dx &-\int_0^{t\wedge\mathfrak t_R}\int_{\mt}\mu\Delta \bfu\cdot\bfphi\dxs+\int_0^{t\wedge\mathfrak t_R}\int_{\mt}(\bfu\cdot\nabla)\bfu\cdot\bfphi\dxs\\
\label{eq:pressure}&=\int_{\mt}\bfu(0)\cdot\bfvarphi\dx
+\int_0^{t\wedge\mathfrak t_R}\int_{\mt}\pi\,\Div\bfphi\dxs
\\
\nonumber&+\int_{\mt}\int_0^{t\wedge\mathfrak t_R}\Phi(\bfu)\,\dd W\cdot \bfvarphi\dx,
\end{align}
where
\begin{align*}
\pi&=-\Delta^{-1}_{\mt}\Div\big((\bfu\cdot\nabla)\bfu\big).
\end{align*}

In the following we will analyse how the regularity of $\bfu$ transfers to $\pi$, where again
$R>0$ is a fixed truncation parameter and $T>0$ an arbitrary but fixed time.
\begin{lemma}\label{lem:pressure}
\begin{enumerate}
\item[(a)] Under the assumptions of Lemma \ref{lem:reg} (b) we have
\begin{align*}
\E\bigg[\bigg(\int_0^{T\wedge \mathfrak t_R}\|\pi\|_{W^{1,2}_x}^2\dt\bigg)^{\frac{r}{4}}\bigg]\leq\,cR^{3r}\,\E\Big[1+\|\bfu_0\|_{W^{1,2}_x}^r\Big].
\end{align*}
\item[(b)] Under the assumptions of Lemma \ref{lem:reg} (c) we have
\begin{align*}
\E\bigg[\bigg(\int_0^{T\wedge \mathfrak t_R}\|\pi\|_{W^{2,2}_x}^2\dt\bigg)^{\frac{r}{4}}\bigg]\leq\,cR^{5r}\,\E\Big[1+\|\bfu_0\|_{W^{2,2}_x}^r\Big].
\end{align*}
\end{enumerate}
Here $c=c(r,T)>0$ is independent of $R$.
\end{lemma}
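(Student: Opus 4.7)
The approach is to reduce both bounds to pathwise estimates on the convective term $(\bfu\cdot\nabla)\bfu$ via the Helmholtz-projection representation of $\pi$, and then to feed in the moment bounds from Lemma \ref{lem:reg}. Writing $\pi=-\Delta_{\mt}^{-1}\Div((\bfu\cdot\nabla)\bfu)$, the mapping property \eqref{eq:Delta3} of $\nabla\Delta_{\mt}^{-1}\Div$ in $L^2_x$ and in $W^{1,2}_x$, together with the Poincaré inequality for the zero-mean representative of $\pi$, yield the pointwise-in-time bounds
\begin{equation*}
\|\pi\|_{W^{1,2}_x}\leq c\,\|(\bfu\cdot\nabla)\bfu\|_{L^2_x},\qquad \|\pi\|_{W^{2,2}_x}\leq c\,\|(\bfu\cdot\nabla)\bfu\|_{W^{1,2}_x},
\end{equation*}
which are the targets for parts (a) and (b), respectively.

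For (a), I estimate $\|(\bfu\cdot\nabla)\bfu\|_{L^2_x}^2$ via the 2D Ladyzhenskaya chain $\|\bfu\|_{L^4_x}^2\|\nabla\bfu\|_{L^4_x}^2\leq c\|\bfu\|_{L^2_x}\|\nabla\bfu\|_{L^2_x}^2\|\bfu\|_{W^{2,2}_x}$. Integrating over $[0,T\wedge\mathfrak t_R]$, pulling out the supremum-in-time norms, and using Cauchy--Schwarz in time to convert $\int\|\bfu\|_{W^{2,2}_x}\dt$ into $T^{1/2}(\int\|\bfu\|_{W^{2,2}_x}^2\dt)^{1/2}$, one obtains a factorised product whose $(r/4)$-th power splits, after Cauchy--Schwarz in $\Omega$, into pieces controlled by Lemma \ref{lem:reg} (a) (cost-free in $R$) and Lemma \ref{lem:reg} (b) (carrying $R^{3r}$), so that the claimed bound emerges.

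For (b), I decompose $\|(\bfu\cdot\nabla)\bfu\|_{W^{1,2}_x}^2\lesssim\|(\bfu\cdot\nabla)\bfu\|_{L^2_x}^2+\|\nabla\bfu\|_{L^4_x}^4+\|\bfu\|_{L^\infty_x}^2\|\nabla^2\bfu\|_{L^2_x}^2$. The first piece is handled as in Step (a) and is already of order $R^{3r}$. For the second, Ladyzhenskaya gives $\|\nabla\bfu\|_{L^4_x}^4\leq c\|\nabla\bfu\|_{L^2_x}^2\|\bfu\|_{W^{2,2}_x}^2$. For the last, the 2D Sobolev embedding $W^{1+\beta,2}_x\hookrightarrow L^\infty_x$ (valid for any $\beta>0$) gives $\|\bfu\|_{L^\infty_x}\leq c\|\bfu\|_{W^{1+\beta,2}_x}$. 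After integrating in time and raising to the $(r/4)$-th power, a single Cauchy--Schwarz in $\Omega$ pairs $\sup_t\|\bfu\|_{W^{1+\beta,2}_x}^{r/2}$, controlled by Lemma \ref{lem:reg} (c) with factor $R^{5r/2}$, against $\|\bfu\|_{L^2_tW^{2,2}_x}^{r/2}$, controlled by Lemma \ref{lem:reg} (b) with factor $R^{3r/2}$, producing a net $R^{4r}\leq R^{5r}$ (since $R\geq 1$), as required.

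The main obstacle is not analytical but one of bookkeeping: one must match the precise product structure of the pathwise convective bounds against the moment estimates of Lemma \ref{lem:reg}, so that the advertised polynomial growth in $R$ and the correct power of $\|\bfu_0\|$ survive on the right. The step where the additional regularity of Lemma \ref{lem:reg} (c) is indispensable (rather than just (b)) is precisely the $L^\infty_x$-bound on $\bfu$ in the mixed term $\|\bfu\|_{L^\infty_x}^2\|\nabla^2\bfu\|_{L^2_x}^2$ appearing in part (b), since $\bfu\in W^{1,2}_x$ alone does not embed into $L^\infty_x$ in 2D.
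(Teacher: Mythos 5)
Your overall strategy coincides with the paper's: write $\pi=-\Delta^{-1}_{\mt}\Div((\bfu\cdot\nabla)\bfu)$, use the mapping property \eqref{eq:Delta3} to reduce everything to the convective term, estimate that term by Ladyzhenskaya/Sobolev embeddings, and feed in Lemma \ref{lem:reg}. Part (b) of your argument is sound; your H\"older pair $(\infty,2)$ with $W^{1+\beta,2}_x\hookrightarrow L^\infty_x$ simply replaces the paper's choice $(q,p)$ with $p>2$ close to $2$ and $W^{2+\beta,2}_x\hookrightarrow W^{2,p}_x$, and your Cauchy--Schwarz in $\Omega$ there pairs Lemma \ref{lem:reg}(c) and (b) each at power $r/2$, which closes under the stated hypotheses (note only that, as in Lemma \ref{lem:reg}(c) itself, the term $\|\bfu_0\|^{2r}_{W^{1,2}_x}$ should also appear on the right).

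The one step that does not close as written is the Cauchy--Schwarz in $\Omega$ in part (a). After Ladyzhenskaya and Cauchy--Schwarz in time, the $(r/4)$-th power of $\int_0^{T\wedge\mathfrak t_R}\|(\bfu\cdot\nabla)\bfu\|_{L^2_x}^2\dt$ is bounded by $\sup_t\|\bfu\|_{L^2_x}^{r/4}\,\sup_t\|\nabla\bfu\|_{L^2_x}^{r/2}\,\big(\int_0^{T\wedge\mathfrak t_R}\|\bfu\|_{W^{2,2}_x}^2\dt\big)^{r/8}$. If you peel off the Lemma \ref{lem:reg}(a)-factor by Cauchy--Schwarz in $\Omega$, the remaining factor, once squared, has total exponent $3r/4$ in the quantity controlled by Lemma \ref{lem:reg}(b); applying that lemma then requires $\bfu_0\in L^{3r/2}(\Omega;W^{1,2}_x)$ rather than the assumed $L^r$, and it does not reproduce $\E[1+\|\bfu_0\|^r_{W^{1,2}_x}]$ on the right. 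The splitting is in fact unnecessary: the three exponents add up to $r/8+r/4+r/8=r/2$, so Young's inequality bounds the whole product by $\big(1+\sup_t\|\bfu\|^2_{W^{1,2}_x}+\int_0^{T\wedge\mathfrak t_R}\|\bfu\|^2_{W^{2,2}_x}\dt\big)^{r/2}$, and a single application of Lemma \ref{lem:reg}(b) finishes --- this is exactly what the paper does. With that absorption in place of the $\Omega$-splitting, your proof of (a) is complete.
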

\begin{proof}
Ad~{\bf (a)}. Arguing as in \cite[Corollary 2.5]{BrDo} and using \eqref{eq:Delta3} we obtain
\begin{align*}
\E\bigg[\bigg(\int_0^{T\wedge \mathfrak t_R}\|\pi\|_{W^{1,2}_x}^2\dt\bigg)^{\frac{r}{4}}\bigg]
&\leq\,c\,\E\bigg[\bigg(1+\sup_{t\in[0, T\wedge \mathfrak t_R]}\|\bfu\|_{W^{1,2}_x}^2+\int_0^{T\wedge \mathfrak t_R}\|\bfu\|_{W^{2,2}_x}^2\dt\bigg)^{\frac{r}{2}}\bigg]
\end{align*}
Consequently, Lemma \ref{lem:reg} (b) implies (a).

Ad~{\bf (b)}. Using \eqref{eq:Delta3} we have for $p>2$ close to 2 and $q:=\frac{2p}{p-2}$
\begin{align*}
\E\bigg[\bigg(\int_0^{T\wedge \mathfrak t_R}&\|\pi\|_{W^{2,2}_x}^2\dt\bigg)^{\frac{r}{4}}\bigg]\leq\,c\,\E\bigg[\bigg(\int_0^{T\wedge \mathfrak t_R}\|\bfu\cdot\nabla\bfu\|_{W^{1,2}_x}^2\dt\bigg)^{\frac{r}{4}}\bigg]\\
&\leq\,c\,\E\bigg[\bigg(\int_0^{T\wedge \mathfrak t_R}\|\nabla\bfu\|_{L^{4}_x}^4\dt+\int_0^{T\wedge \mathfrak t_R}\|\bfu\|_{L^q_x}^2\|\nabla^2\bfu\|_{L^{p}_x}^2\dt\bigg)^{\frac{r}{4}}\bigg]\\
&\leq\,c\,\E\bigg[\bigg(\int_0^{T\wedge \mathfrak t_R}\|\bfu\|_{W^{1+\beta,2}_x}^4\dt+\sup_{0\leq t\leq T\wedge \mathfrak t_R}\|\bfu\|_{W^{1,2}_x}^2\int_0^{T\wedge \mathfrak t_R}\|\bfu\|_{W^{2+\beta,2}_x}^2\dt\bigg)^{\frac{r}{4}}\bigg]\\
&\leq\,c\,\E\bigg[\bigg(\sup_{0\leq t\leq T\wedge \mathfrak t_R}\|\bfu\|_{W^{1+\beta,2}_x}^2+\int_0^{T\wedge \mathfrak t_R}\|\bfu\|_{W^{2+\beta,2}_x}^2\dt\bigg)^{\frac{r}{2}}\bigg]
\end{align*}
using the embeddings $W^{1+\beta,2}(\mt,\R^2)\hookrightarrow W^{1,4}(\mt,\R^2)$ and $W^{2+\beta,2}(\mt,\R^2)\hookrightarrow W^{2,p}(\mt,\R^2)$, which hold for an appropriate choice of $\beta\in(0,1)$.
Hence using Lemma \ref{lem:reg} (c) completes the proof.
\end{proof}

\begin{corollary}\label{cor:uholder}
\begin{enumerate}
\item[(a)] Let the assumptions of Lemma \ref{lem:reg} (b) be satisfied for some $r>2$. For all $\alpha<\frac{1}{2}$ we have
\begin{align}
\label{eq:holder}
\E\Big[\Big(\|\bfu(\cdot\wedge\mathfrak t_R)\|_{C^\alpha([0,T];L^{2}_x)}\Big)^{\frac{r}{2}}\Big]\leq cR^{3r}\,\E\Big[1+\|\bfu_0\|_{W^{1,2}_x}^r\Big].
\end{align}
\item[(b)] Let the assumptions of Lemma \ref{lem:reg} (c) be satisfied for some $r>2$. For all $\alpha<\frac{1}{2}$ we have
\begin{align}
\label{eq:holder}
\E\Big[\Big(\|\bfu(\cdot\wedge\mathfrak t_R)\|_{C^\alpha([0,T];W^{1,2}_x)}\Big)^{\frac{r}{2}}\Big]\leq cR^{5r}\,\E\Big[1+\|\bfu_0\|_{W^{2,2}_x}^r+\|\bfu_0\|_{W^{1,2}_x}^{2r}\Big].
\end{align}
\end{enumerate}
Here $c=c(r,T,\alpha)>0$ is independent of $R$.
\end{corollary}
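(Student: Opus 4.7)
The plan is to establish $L^{r/2}(\Omega)$-bounds for the time-increments of $\bfu(\cdot\wedge\mathfrak t_R)$ and then convert them into Hölder regularity through the Kolmogorov-type embedding $W^{\theta,p}(0,T;X)\hookrightarrow C^{\theta-1/p}([0,T];X)$ (or, equivalently, the Garsia--Rodemich--Rumsey lemma). For part (a), I would start from the Helmholtz-projected form of the momentum equation \eqref{eq:mom}, writing for $0\leq s\leq t\leq T$
\begin{equation*}
\bfu(t\wedge\mathfrak t_R)-\bfu(s\wedge\mathfrak t_R)=\int_{s\wedge\mathfrak t_R}^{t\wedge\mathfrak t_R}\mathcal P\bigl[\mu\Delta\bfu-(\bfu\cdot\nabla)\bfu\bigr]\dd\sigma+\int_{s\wedge\mathfrak t_R}^{t\wedge\mathfrak t_R}\Phi(\bfu)\,\dd W,
\end{equation*}
using $\mathcal P\Phi(\bfu)=\Phi(\bfu)$ by the solenoidal assumption on $\Phi$. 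Cauchy--Schwarz in time bounds the $L^2_x$-norm of the drift by $|t-s|^{1/2}$ times an $L^2(0,T\wedge\mathfrak t_R;L^2_x)$-quantity whose $r/2$-moment is controlled by Lemma \ref{lem:reg}(b) (using Ladyzhenskaya's inequality $\|(\bfu\cdot\nabla)\bfu\|_{L^2_x}\leq c\|\bfu\|_{L^2_x}^{1/2}\|\nabla\bfu\|_{L^2_x}\|\nabla^2\bfu\|_{L^2_x}^{1/2}$ for the convective term). The Burkholder--Davis--Gundy inequality combined with \eqref{eq:phi0} and Lemma \ref{lem:reg}(a) yields the analogous bound $c(R)|t-s|^{r/4}$ for the stochastic integral. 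Putting both pieces together produces $\mathbb E\|\bfu(t\wedge\mathfrak t_R)-\bfu(s\wedge\mathfrak t_R)\|_{L^2_x}^{r/2}\leq c\,R^{3r}\,\mathbb E[1+\|\bfu_0\|_{W^{1,2}_x}^{r}]\,|t-s|^{r/4}$, and Garsia--Rodemich--Rumsey then furnishes \eqref{eq:holder} for any $\alpha<1/2$.

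For part (b) the same decomposition is used in a stronger norm. The stochastic integral in $W^{1,2}_x$ is controlled by BDG and \eqref{eq:phi1a}, exploiting the uniform bound $\|\bfu(\cdot\wedge\mathfrak t_R)\|_{W^{1,2}_x}\leq R$. The delicate point---and the main obstacle---is that a direct estimate of the drift in $W^{1,2}_x$ would require $\Delta\bfu\in L^2(0,T\wedge\mathfrak t_R;W^{1,2}_x)$, i.e.\ $\bfu\in L^2(W^{3,2}_x)$, which lies strictly beyond the regularity $W^{2+\beta,2}_x$ (any $\beta<1$) supplied by Lemma \ref{lem:reg}(c). My workaround is to first run the part (a) argument in the \emph{weaker} $W^{\beta,2}_x$-norm, using $\|\mathcal P\Delta\bfu\|_{W^{\beta,2}_x}\leq c\|\bfu\|_{W^{2+\beta,2}_x}$ and an analogous bound on the convective term (combined with Lemma \ref{lem:reg}(c) and \eqref{eq:phi1a} for the stochastic piece), in order to obtain $\mathbb E\|\bfu(t\wedge\mathfrak t_R)-\bfu(s\wedge\mathfrak t_R)\|_{W^{\beta,2}_x}^{r/2}\leq c(R)|t-s|^{r/4}$. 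Spatial interpolation via $W^{1,2}_x=[W^{\beta,2}_x,W^{1+\beta,2}_x]_{1-\beta}$, together with the $L^{r/2}(\Omega)$-uniform control of $\sup_t\|\bfu(\cdot\wedge\mathfrak t_R)\|_{W^{1+\beta,2}_x}$ from Lemma \ref{lem:reg}(c), then upgrades this to $\mathbb E\|\bfu(t\wedge\mathfrak t_R)-\bfu(s\wedge\mathfrak t_R)\|_{W^{1,2}_x}^{r/2}\leq c(R)|t-s|^{\beta r/4}$, and a final application of Garsia--Rodemich--Rumsey delivers the Hölder exponent $\beta/2$, which is pushed arbitrarily close to $1/2$ by choosing $\beta\to 1^-$, thereby recovering \eqref{eq:holder} for any $\alpha<1/2$.
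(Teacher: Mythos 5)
Your overall architecture matches the paper's: for (a) you split the (Helmholtz-projected) equation into a Bochner drift and a stochastic integral and estimate time increments; for (b) you first obtain temporal regularity in the weaker space $W^{\beta,2}_x$ and then interpolate against the $L^\infty_t W^{1+\beta,2}_x$ bound of Lemma \ref{lem:reg}~(c) — this is exactly the paper's interpolation step (the paper works with \eqref{eq:pressure} and hence also needs Lemma \ref{lem:pressure}~(b) for the pressure term, which your projected formulation avoids; that part is fine).

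The genuine gap is in the final conversion of increment bounds into H\"older norms. From $\E\|\bfu(t\wedge\mathfrak t_R)-\bfu(s\wedge\mathfrak t_R)\|^{p}\le C|t-s|^{p/2}$ with $p=r/2$, the Garsia--Rodemich--Rumsey/Kolmogorov embedding $W^{\delta,p}(0,T;X)\hookrightarrow C^{\delta-1/p}([0,T];X)$ only yields the exponent $\alpha<\tfrac12-\tfrac1p=\tfrac12-\tfrac2r$, not every $\alpha<\tfrac12$. For the stated range $r>2$ this is vacuous whenever $r\le 4$, and even for the values actually used later (the proof of Theorem \ref{thm:4} invokes the corollary for \emph{second} moments of the $C^\alpha$-norm with $\alpha$ arbitrarily close to $\tfrac12$ to obtain the optimal rate $\tau^{2\alpha}$) it strictly degrades the exponent. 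The loss is avoidable for the drift terms: Cauchy--Schwarz in time gives the \emph{pathwise} bound $\big\|\int_s^t f\,\dd\sigma\big\|_{X}\le|t-s|^{1/2}\|f\|_{L^2(0,T;X)}$, so these contributions are surely $C^{1/2}$ and their $L^{r/2}(\Omega)$-moments are exactly those of $\|f\|_{L^2_tX}$ — no two-point moment/GRR argument is needed and no exponent is lost. The loss is \emph{not} avoidable by plain BDG plus Kolmogorov for the stochastic integral; reaching every $\alpha<\tfrac12$ with only low moments of the integrand requires a sharper maximal H\"older estimate for stochastic integrals (the factorization method with the correct moment bookkeeping, or the estimates of \cite{Joern} that the paper already uses in step ($\mathbf{c}_3$) of the proof of Lemma \ref{lem:reg}; the paper's part (a) delegates precisely this to \cite[Corollary 2.6]{BrDo}). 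As written, your argument proves the corollary only for $\alpha<\tfrac12-\tfrac2r$.
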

\begin{proof}

As in \cite[Corollary 2.6]{BrDo} we can combine Lemmas \ref{lem:reg} and \ref{lem:pressure} to conclude the result concerning the time regularity of $\bfu$ form {\bf (a)}. As far is {\bf (b)} is concerned we analyse each term in equation \eqref{eq:pressure} separately. Lemma \ref{lem:reg} {\bf (b)} implies
\begin{align*}
\int_0^{\cdot\wedge\mathfrak t_R}\Delta\bfu\ds\in L^r(\Omega;L^{2}(0,T;W^{\beta,2}(\mt,\R^2))),
\end{align*}
whereas Lemmas \ref{lem:reg} {\bf (c)} and \ref{lem:pressure} {\bf (b)} yield
\begin{align*}
\int_0^{\cdot\wedge\mathfrak t_R}\big(\Div(\bfu\otimes\bfu)+\nabla\pi\big)\ds\in L^{\frac{r}{2}}(\Omega;L^{2}(0,T;W^{\beta,2}(\mt,\R^2))).
\end{align*}
Finally, we have
\begin{align*}
\int_0^{\cdot\wedge\mathfrak t_R}\Phi(\bfu)\,\dd W\in L^{r}(\Omega;C^{\alpha}([0,T];L^2(\mt,\R^2))).
\end{align*}
by combing Lemma \ref{lem:reg} {\bf (a)} with \eqref{eq:phi1a}. 
We conclude that
\begin{align*}
\E\Big[\Big(\|\bfu(\cdot\wedge\mathfrak t_R)\|_{C^\alpha([0,T];W^{\beta,2}_x)}\Big)^{\frac{r}{2}}\Big]\leq cR^{5r}\,\E\Big[1+\|\bfu_0\|_{W^{2,2}_x}^r+\|\bfu_0\|_{W^{1,2}_x}^{2r}\Big].
\end{align*}
for all $\beta<1$. Interpolating this with the estimate from Lemma \ref{lem:reg} {\bf (c)} gives the claim.
\end{proof}

\section{Error analysis: direct comparison}
\label{sec:error}
Now we consider a fully practical scheme combining a semi-implicit Euler scheme in time with a finite element approximation in space. It is defined on the given  filtered probability space $(\Omega,\mathfrak F,(\mathfrak F_t),\p)$ on which $W$ as well as the maximal strong pathwise solution to \eqref{eq:SNS} are defined.
For a given $h>0$ let $\bfu_{h,0}$ be an $\mathfrak F_0$-measurable random variable with values in $V^h_{\Div}(\mt,\R^2)$ (for instance $\Pi_h\bfu_0$; see \eqref{eq:stab}). We aim at constructing iteratively a sequence of random variables $(\bfu_{h,m},p_{h,m})$ such that
for every $(\bfphi, \chi) \in V^h(\mt,\R^2) \times P^h(\mt)$ it holds true $\p$-a.s.
\begin{align}\label{txdiscrpi}
\begin{aligned}
\int_{\mt}\bfu_{h,m}\cdot\bfvarphi \dx &+\tau\int_{\mt}\big((\bfu_{h,m-1}\cdot\nabla)\bfu_{h,m}+(\Div\bfu_{h,m-1})\bfu_{h,m}\big)\cdot\bfphi\dx\\
&\qquad+\mu\,\tau\int_{\mt}\nabla\bfu_{h,m}:\nabla\bfphi\dx-\tau\int_{\mt}p_{h,m}\,\Div\bfvarphi\dx\\&=\int_{\mt}\bfu_{h,m-1}\cdot\bfvarphi \dx+\int_{\mt}\Phi(\bfu_{h,m-1})\,\Delta_mW\cdot \bfvarphi\dx\, , \\
\int_{\mt} {\rm div} {\bf u}_{h,m} \cdot \chi&\dx = 0\, ,
\end{aligned}
\end{align}
where $\Delta_m W=W(t_m)-W(t_{m-1})$. Here the interval $[0,T]$ is decomposed into an equidistant grid of time points $t_m=m\tau=m\frac{T}{M}$ with $M\in\N$.
For our theoretical analysis it is convenient to work with the pressure-free formulation of \eqref{txdiscrpi}: For every $\bfphi\in V^h_{\Div}(\mt,\R^2)$ it holds true $\p$-a.s.
\begin{align}\label{txdiscr}
\begin{aligned}
\int_{\mt}&\bfu_{h,m}\cdot\bfvarphi \dx +\tau\int_{\mt}\big((\bfu_{h,m-1}\cdot\nabla)\bfu_{h,m}+(\Div\bfu_{h,m-1})\bfu_{h,m}\big)\cdot\bfphi\dx\\
&+\mu\,\tau\int_{\mt}\nabla\bfu_{h,m}:\nabla\bfphi\dx=\int_{\mt}\bfu_{h,m-1}\cdot\bfvarphi \dx+\int_{\mt}\Phi(\bfu_{h,m-1})\,\Delta_mW\cdot \bfvarphi\dx.
\end{aligned}
\end{align}
We quote the following result concerning the solution $(\bfu_{h,m})_{m=1}^M$ to \eqref{txdiscr} from \cite[Lemma 3.1]{BCP}.

\begin{lemma}\label{lemma:3.1BCP}
Fix $T>0$.
Assume that $\bfu_{h,0}\in L^{2^q}(\Omega,V_{\Div}^h(\mt,\R^2))$ with $q\in\N$ is an $\mathfrak F_0$-measurable random variable.
Suppose that $\Phi$ satisfies \eqref{eq:phi0}. Then the iterates $(\bfu_{h,m})_{m=1}^M$ given by \eqref{txdiscr}
are $(\mathfrak F_{t_m})$-measurable. Moreover, the following estimate holds uniformly in $M$ and $h$:
\begin{align}
\label{lem:4.1}\E\bigg[\max_{1\leq m\leq M}\|\bfu_{h,m}\|^{2^q}_{L^{2}_x}+\tau\sum_{k=1}^M\|\bfu_{h,m}\|^{2^{q}-2}_{L^{2}_x}\|\nabla\bfu_{h,m}\|^2_{L^2_x}\bigg]&\leq\,c(q,T)\E\Big[1+\|\bfu_{h,0}\|_{L_x^2}^{2^q}\Big].
\end{align}
\end{lemma}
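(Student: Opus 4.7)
The plan is to first establish existence and $(\mathfrak F_{t_m})$-measurability of the iterates step by step, then to derive the moment bound by a discrete It\^o-type energy argument closed by Burkholder--Davis--Gundy and a discrete Gronwall lemma.

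For existence and adaptedness, at each step $m$ equation \eqref{txdiscr} is a linear problem for $\bfu_{h,m}\in V_{\Div}^h(\mt,\R^2)$, with the bilinear form on the left-hand side made coercive by the Temam skew-symmetrization of the convective term (the reason $(\Div\bfu_{h,m-1})\bfu_{h,m}$ has been added is precisely so that testing against $\bfu_{h,m}$ kills this contribution, using that $\bfu_{h,m}$ has zero boundary trace). Unique solvability in the finite-dimensional divergence-free finite element space is immediate for every fixed $\omega\in\Omega$, and since $\bfu_{h,m}$ depends only on the $\mathfrak F_{t_{m-1}}$-measurable datum $\bfu_{h,m-1}$ and the $\mathfrak F_{t_m}$-measurable increment $\Delta_m W$, induction on $m$ starting from the $\mathfrak F_0$-measurable initial value yields $\mathfrak F_{t_m}$-measurability throughout.

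For the moment bound I would first test \eqref{txdiscr} with $\bfphi=\bfu_{h,m}$ and apply the algebraic identity $2(a,a-b)_{L^2_x}=\|a\|^2_{L^2_x}-\|b\|^2_{L^2_x}+\|a-b\|^2_{L^2_x}$ to obtain the basic discrete energy identity
\[
\|\bfu_{h,m}\|^2_{L^2_x}-\|\bfu_{h,m-1}\|^2_{L^2_x}+\|\bfu_{h,m}-\bfu_{h,m-1}\|^2_{L^2_x}+2\mu\tau\|\nabla\bfu_{h,m}\|^2_{L^2_x}=2\int_{\mt}\Phi(\bfu_{h,m-1})\Delta_mW\cdot\bfu_{h,m}\dx.
\]
To promote this to the $2^q$-th moment I would apply the discrete Taylor expansion
\[
a^{\rho}-b^{\rho}=\rho\,b^{\rho-1}(a-b)+R,\qquad \rho:=2^{q-1},\ |R|\leq c_\rho\max(a,b)^{\rho-2}(a-b)^2,
\]
with $a=\|\bfu_{h,m}\|^2_{L^2_x}$, $b=\|\bfu_{h,m-1}\|^2_{L^2_x}$. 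Substituting the base identity into the leading term and splitting $\bfu_{h,m}=\bfu_{h,m-1}+(\bfu_{h,m}-\bfu_{h,m-1})$ inside the stochastic integral produces a telescoping inequality of the form
\[
\|\bfu_{h,m}\|^{2^q}_{L^2_x}-\|\bfu_{h,m-1}\|^{2^q}_{L^2_x}+c\mu\tau\|\bfu_{h,m-1}\|^{2^q-2}_{L^2_x}\|\nabla\bfu_{h,m}\|^2_{L^2_x}\leq M_m+N_m,
\]
where $M_m$ is a genuine $(\mathfrak F_{t_m})$-martingale difference and $N_m$ is a predictable remainder bounded in expectation by $c\tau\,\E\bigl[\|\bfu_{h,m-1}\|^{2^q-2}_{L^2_x}(1+\|\bfu_{h,m-1}\|^2_{L^2_x})\bigr]$ via \eqref{eq:phi0}. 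Summing in $m\leq k$, taking the maximum in $k$, applying discrete Burkholder--Davis--Gundy to $\sum M_m$, Young's inequality to absorb the resulting maximum of $\|\bfu_{h,m-1}\|^{2^q}_{L^2_x}$ back into the left-hand side with small constant, and closing by discrete Gronwall yields \eqref{lem:4.1}.

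The main obstacle is the bookkeeping in the stochastic splitting: the prefactor multiplying $\Delta_m W$ must be $\mathfrak F_{t_{m-1}}$-measurable for Burkholder--Davis--Gundy to apply to a bona fide discrete martingale, which is why the Taylor expansion must be performed around $\bfu_{h,m-1}$ rather than around $\bfu_{h,m}$. This in turn produces the second-order remainder $R$ controlled by $(\|\bfu_{h,m}\|^2_{L^2_x}-\|\bfu_{h,m-1}\|^2_{L^2_x})^2$, which can be absorbed using the $\|\bfu_{h,m}-\bfu_{h,m-1}\|^2_{L^2_x}$ term appearing in the base identity together with Young's inequality. An induction on $q$ starting from the direct case $q=1$ provides the needed lower-order moment bounds entering $N_m$ and the quadratic-variation estimate, and keeps the constant $c(q,T)$ transparent.
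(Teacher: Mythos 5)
The paper does not actually prove this lemma --- it is quoted verbatim from \cite[Lemma 3.1]{BCP} --- and your argument is essentially the proof given there: test \eqref{txdiscr} with $\bfu_{h,m}$, use the polarization identity, split the noise term into a genuine martingale increment (paired with the $\mathfrak F_{t_{m-1}}$-measurable iterate) plus a part absorbed by $\|\bfu_{h,m}-\bfu_{h,m-1}\|_{L^2_x}^2$, and pass to the $2^q$-th moments by a discrete It\^{o}-type expansion with induction on $q$, Burkholder--Davis--Gundy and discrete Gronwall. Two caveats. First, the cancellation of the convective term requires the Temam factor $\tfrac12$ in front of $(\Div\bfu_{h,m-1})\bfu_{h,m}$ (as in the scheme of \cite{BCP}); with the coefficient $1$ printed in \eqref{txdiscr} one is left with $\tfrac12\int_{\mt}(\Div\bfu_{h,m-1})|\bfu_{h,m}|^2\dx\neq 0$, so your claim that testing with $\bfu_{h,m}$ kills this contribution holds for the intended scheme rather than the one as literally typeset. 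Second, the absorption of the second-order remainder $\max(a,b)^{\rho-2}(a-b)^2\le 4\max(a,b)^{\rho-1}\|\bfu_{h,m}-\bfu_{h,m-1}\|_{L^2_x}^2$ is the one genuinely delicate step: when $\|\bfu_{h,m}\|_{L^2_x}\gg\|\bfu_{h,m-1}\|_{L^2_x}$ this cannot be traded against the available term $b^{\rho-1}\|\bfu_{h,m}-\bfu_{h,m-1}\|_{L^2_x}^2$, and one first needs the crude consequence of the base identity $\|\bfu_{h,m}\|_{L^2_x}^2\le 2\|\bfu_{h,m-1}\|_{L^2_x}^2+c\,\|\Phi(\bfu_{h,m-1})\Delta_mW\|_{L^2_x}^2$ before the conditional Gaussian moments of $\Delta_mW$ and the induction hypothesis at level $q-1$ close the estimate; your sketch gestures at this but does not spell it out.
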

Our error analysis for \eqref{txdiscr} is based on an auxiliary problem which coincides with \eqref{txdiscr} until a discrete stopping time. As we shall see below both problems coincide with high probability.
For every $m\geq 1$ we introduce
 the discrete stopping time
\begin{align}\mathfrak t_m^R:=\max_{1\leq n\leq m}\big\{t_n:t_n\leq \mathfrak t_R\big\},\label{eq:tmR}
\end{align}
which is obviously $(\mathfrak F_{t_m})$-stopping time (but not an $(\mathfrak F_t)$-stopping time). 
Setting
$\tau_{m}^R:=\mathfrak t_m^R-\mathfrak t_{m-1}^R$
we introduce $\bfu_{h,m}^R$ as the $V_{\Div}^h(\mt,\R^2)$-valued solution to
\begin{align}\label{txdiscrR}
\begin{aligned}
\int_{\mt}&\bfu_{h,m}^R\cdot\bfvarphi \dx +\tau_{m}^R\int_{\mt}\big((\bfu^R_{h,m-1}\cdot\nabla)\bfu_{h,m}^R+(\Div\bfu_{h,m-1}^R)\bfu_{h,m}^R\big)\cdot\bfphi\dx\\
&+\mu\,\tau_{m}^R\int_{\mt}\nabla\bfu_{m}^R:\nabla\bfphi\dx=\int_{\mt}\bfu_{h,m-1}^R\cdot\bfvarphi \dx+\frac{\tau_{m}^R}{\tau}\int_{\mt}\Phi(\bfu_{h,m-1}^R)\,\Delta_m W\cdot \bfvarphi\dx
\end{aligned}
\end{align}
for every $\bfphi\in V^h_{\Div}(\mt,\R^2)$. Obviously $\bfu_{h,m}^R=\bfu_{h,m}$ in $[t_m= \mathfrak t_m^R]$.
Our main effort is dedicated to the proof of an error estimate
for \eqref{txdiscrR} in the following theorem, for which $R>0$ is a fixed truncation parameter and $T>0$ an arbitrary but fixed time.

\begin{theorem}\label{thm:4}
Let $\bfu_0\in L^8(\Omega,W^{2,2}(\mt,\R^2))\cap L^{20}(\Omega;W^{1,2}_{0,\Div}(\mt,\R^2))$ be $\F_0$-measurable and assume that $\Phi$ satisfies \eqref{eq:phi0}--\eqref{eq:phi2b}. Let $$(\bfu,(\mathfrak{t}_R)_{R\in\N},\mathfrak{t})$$ be the unique maximal global strong solution to \eqref{eq:SNS} in the sense of Definition \ref{def:maxsol}.
Let $(\mathfrak t_m^R)_{m=1}^M$ be defined by \eqref{eq:tmR}.
Then we have for all $R\in\N$ and all $\alpha<\frac{1}{2}$, $\beta<1$
\begin{align}\label{eq:thm:4}
\begin{aligned}
\E\bigg[\max_{1\leq m\leq M}\|\bfu(\mathfrak t^R_{m})-\bfu_{h,m}^R\|_{L^2_x}^2&+\sum_{m=1}^M \tau_{m}^R\|\nabla\bfu(\mathfrak t_{m}^R)-\nabla\bfu_{h,m}^R\|_{L^2_x}^2\bigg)\bigg]\\&\leq \,ce^{cR^4}\,\big(h^{2\beta}+\tau^{2\alpha}\big),
\end{aligned}
\end{align}
where $(\bfu_{h,m}^R)_{m=1}^M$ is the solution to \eqref{txdiscrR} with $\bfu_{h,0}^R=\Pi_h\bfu_0$.
The constant $c$ in \eqref{eq:thm:4} is independent of $\tau$, $h$ and $R$.
\end{theorem}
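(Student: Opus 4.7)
Denote the error $\bfe_m:=\bfu(\mathfrak t_m^R)-\bfu_{h,m}^R$ and split it through the Stokes projection as $\bfe_m=\bfeta_m+\bfxi_m$ with $\bfeta_m:=\bfu(\mathfrak t_m^R)-\Pi_h\bfu(\mathfrak t_m^R)$ and $\bfxi_m:=\Pi_h\bfu(\mathfrak t_m^R)-\bfu_{h,m}^R\in V^h_{\Div}(\mt,\R^2)$. The projection piece is harmless: by \eqref{eq:stab'} and Lemma \ref{lem:reg}(c), $\E[\max_m\|\bfeta_m\|_{L^2_x}^2+\sum_m\tau_m^R\|\nabla\bfeta_m\|_{L^2_x}^2]\lesssim h^{2\beta}e^{cR^4}$, so the task reduces to estimating $\bfxi_m$. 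For this, integrate the momentum equation \eqref{eq:mom} between the discrete stopping times $\mathfrak t_{m-1}^R$ and $\mathfrak t_m^R$, apply the Helmholtz decomposition to introduce the pressure $\pi$ as in \eqref{eq:pressure}, test with $\bfphi\in V^h_{\Div}(\mt,\R^2)$, and subtract \eqref{txdiscrR}. Using that $\bfxi_m$ is discretely solenoidal and $\Pi_h$ is the $L^2$-projection onto $V^h_{\Div}(\mt,\R^2)$, this yields an identity of the form
\begin{align*}
(\bfxi_m-\bfxi_{m-1},\bfphi)_{L^2_x}+\mu\tau_m^R(\nabla\bfxi_m,\nabla\bfphi)_{L^2_x}+\tau_m^R B_m(\bfxi_m,\bfphi)=\sum_{i=1}^{5}\mathrm{Res}_i^m(\bfphi),
\end{align*}
where $B_m(\cdot,\cdot)$ is the skew-symmetrised convective form built from $\bfu_{h,m-1}^R$, and $\mathrm{Res}_1^m,\ldots,\mathrm{Res}_5^m$ collect the consistency errors in, respectively, the time derivative, the viscous term, the convective term, the pressure, and the stochastic integral.

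Now test with $\bfphi=\bfxi_m$. The algebraic identity $2(a-b,a)=\|a\|^2-\|b\|^2+\|a-b\|^2$ produces the discrete energy skeleton
\begin{align*}
\tfrac12\|\bfxi_m\|_{L^2_x}^2-\tfrac12\|\bfxi_{m-1}\|_{L^2_x}^2+\tfrac12\|\bfxi_m-\bfxi_{m-1}\|_{L^2_x}^2+\mu\tau_m^R\|\nabla\bfxi_m\|_{L^2_x}^2\leq\sum_i\mathrm{Res}_i^m(\bfxi_m),
\end{align*}
since the skew-symmetry of the modified convective form built from the correction $(\Div\bfu_{h,m-1}^R)\bfu_{h,m}^R$ makes $B_m(\bfxi_m,\bfxi_m)=0$. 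The viscous consistency term and the pressure consistency term are controlled via \eqref{eq:stab'}--\eqref{eq:stabpi} combined with Lemma \ref{lem:reg}(c) and Lemma \ref{lem:pressure}(b), yielding $O(h^{2\beta})$ contributions up to factors of $R$. The time-consistency term $\int_{\mathfrak t_{m-1}^R}^{\mathfrak t_m^R}(\bfu(s)-\bfu(\mathfrak t_m^R))\ds$ is handled by the H\"older estimate in Corollary \ref{cor:uholder}(b), producing $O(\tau^{2\alpha})$ for any $\alpha<1/2$; the nonlinear consistency term is tamed by writing the difference $(\bfu\cdot\nabla)\bfu-(\bfu_{h,m-1}^R\cdot\nabla)\bfu_{h,m}^R$ as a telescoping sum and invoking Ladyzhenskaya's inequality together with the pointwise bound $\|\bfu_{h,m-1}^R\|_{W^{1,2}_x}\lesssim R$ (which holds \emph{everywhere} on $\Omega$ by construction of the discrete stopping time, and is the crucial gain over the earlier localised method).

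The genuinely new point, as well as the main obstacle, is the stochastic residual
\begin{align*}
\mathrm{Res}_5^m(\bfxi_m)=\int_{\mathfrak t_{m-1}^R}^{\mathfrak t_m^R}\!\!\!\big(\Phi(\bfu(s))-\Phi(\bfu_{h,m-1}^R)\big)\dd W\cdot\bfxi_m\dx+\bigg(\Phi(\bfu_{h,m-1}^R)\Big[\!\int_{\mathfrak t_{m-1}^R}^{\mathfrak t_m^R}\!\!\!\dd W-\tfrac{\tau_m^R}{\tau}\Delta_mW\Big],\bfxi_m\bigg)_{L^2_x}.
\end{align*}
The precise scaling $\frac{\tau_m^R}{\tau}$ in \eqref{txdiscrR} is chosen so that each summand is still a martingale increment with respect to the filtration $(\mathfrak F_{t_m})$: indeed on $[t_m\leq\mathfrak t_R]$ the factor equals $1$ and $\mathfrak t_m^R=t_m$, while on $[t_m>\mathfrak t_R]$ the factor freezes to the frozen time increment. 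This preserves the martingale character that was destroyed in the localised approach of \cite{BrDo,CP} and allows summation over $m$ followed by the Burkholder--Davis--Gundy inequality on the square function $\sum_m|(\Phi(\bfu(\cdot))-\Phi(\bfu_{h,m-1}^R))\bfxi_m|^2$. The Lipschitz bound \eqref{eq:phi0} then permits absorbing the quadratic variation partly into $\max_m\|\bfxi_m\|^2$ and partly into a Gronwall forcing of size $\tau_m^R\|\bfxi_{m-1}\|^2+\tau^{2\alpha}+h^{2\beta}$. The remaining frozen-increment piece is handled by a conditional expectation argument based on $\E[\Delta_mW\mid\mathfrak F_{t_{m-1}}]=0$, which is admissible because $\mathfrak t_{m-1}^R$ and $\bfu_{h,m-1}^R$ are $\mathfrak F_{t_{m-1}}$-measurable. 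Summing the energy skeleton over $m$, taking the maximum inside the expectation, absorbing BDG terms, and invoking the discrete Gronwall lemma yields \eqref{eq:thm:4}; the exponential factor $e^{cR^4}$ enters through the Gronwall constant, which involves $\sup_m\|\bfu_{h,m-1}^R\|^2_{W^{1,2}_x}\lesssim R^2$ entering quadratically via the nonlinear estimate.
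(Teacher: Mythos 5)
Your overall architecture --- testing the error equation with the discretely divergence-free part of the error, extracting a discrete energy identity, treating the stochastic term via a martingale argument up to the stopping time, and closing with a discrete Gronwall lemma --- matches the paper's. But one claim at the heart of your treatment of the convective term is false, and the actual proof is engineered precisely to avoid needing it: you assert that $\|\bfu_{h,m-1}^R\|_{W^{1,2}_x}\lesssim R$ holds \emph{everywhere on} $\Omega$ ``by construction of the discrete stopping time''. It does not. The stopping time $\mathfrak t_R$ in \eqref{eq:blowup}, and hence $\mathfrak t_m^R$ in \eqref{eq:tmR}, is defined through the $W^{1,2}_x$-norm of the \emph{continuous} solution only; the scheme \eqref{txdiscrR} contains no truncation of the discrete iterates, and Lemma \ref{lemma:3.1BCP} yields only moment bounds on $\|\bfu_{h,m}\|_{L^2_x}$ plus a dissipation term, not an almost sure bound on $\|\bfu_{h,m}^R\|_{W^{1,2}_x}$. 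Consequently your estimate of the nonlinear consistency term, and your stated source of the factor $e^{cR^4}$ (``$\sup_m\|\bfu_{h,m-1}^R\|^2_{W^{1,2}_x}\lesssim R^2$ entering quadratically''), do not go through. The paper decomposes the convective difference (the terms $I_4^1$, $I_4^2$, $I_4^3$) so that the only quantities bounded pointwise by $R$ are $\|\bfu(\mathfrak t_m^R)\|_{W^{1,2}_x}$ and $\|\bfu(\mathfrak t_{m-1}^R)\|_{W^{1,2}_x}$ --- the continuous solution, for which $\leq R$ genuinely holds since $\mathfrak t_m^R\leq\mathfrak t_R$ --- while the discrete iterates enter only through the error $\bfe_{h,m}$ (absorbed or fed into Gronwall) and through products such as $\max_n\|\bfe_{h,n}\|_{L^2_x}^2\,\|\bfu(\mathfrak t_m^R)\|^4_{W^{1+\beta,2}_x}$, which are estimated in expectation using Lemma \ref{lemma:3.1BCP} and Lemma \ref{lem:reg}(c). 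The exponential $e^{cR^4}$ arises from $\|\bfu(\mathfrak t_m^R)\|_{W^{1,2}_x}^4\leq R^4$ in the Gronwall coefficient. As written, your argument rests on a bound that is simply not available.

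A second, more repairable, gap concerns the stochastic residual: you apply the Burkholder--Davis--Gundy inequality to $\sum_m\big(\int_{\mathfrak t_{m-1}^R}^{\mathfrak t_m^R}(\Phi(\bfu)-\Phi(\bfu_{h,m-1}^R))\,\dd W,\Pi_h\bfe_{h,m}\big)_{L^2_x}$ with the increment paired against $\Pi_h\bfe_{h,m}$. Since $\bfe_{h,m}$ is only $\mathfrak F_{t_m}$-measurable, this sum is not a martingale and BDG does not apply to it directly. One must first split $\Pi_h\bfe_{h,m}=\Pi_h\bfe_{h,m-1}+\Pi_h(\bfe_{h,m}-\bfe_{h,m-1})$: the first part produces the genuine $(\mathfrak F_t)$-martingale $\mathscr M^1(t\wedge\mathfrak t_R)$ of \eqref{eq:2908} (this is exactly where the discrete stopping time rescues the martingale property, cf.\ Remark \ref{rem}), and the second is handled by Cauchy--Schwarz and the It\^o isometry, with $\|\Pi_h(\bfe_{h,m}-\bfe_{h,m-1})\|_{L^2_x}^2$ absorbed into the left-hand side of the energy identity. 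Finally, your ``frozen-increment'' correction $\Phi(\bfu_{h,m-1}^R)\big[\int_{\mathfrak t_{m-1}^R}^{\mathfrak t_m^R}\dd W-\tfrac{\tau_m^R}{\tau}\Delta_mW\big]$ is identically zero, since $\tau_m^R\in\{0,\tau\}$ and $W(\mathfrak t_m^R)-W(\mathfrak t_{m-1}^R)=\tfrac{\tau_m^R}{\tau}\Delta_mW$; that part of your argument is harmless but vacuous.
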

\begin{remark}\label{rem}
{\bf 1.} In previous papers concerning the periodic problem, in particular \cite{CP}, the idea is to consider the equation for the error in the $m$-th step and multiply by the indicator function of a set $\Omega^{h,\tau}_{m-1}\subset\Omega$. Hereby $\Omega^{h,\tau}_{m-1}\subset\Omega$ is $\mathfrak F_{t_{m-1}}$-measurable and certain quantities up to time $t_{m-1}$ are bounded in $\Omega^{h,\tau}_{m-1}$. It is, however, not necessary to control the continuous solution in this way since global estimates are available, see, e.g., \cite[Lemma 2.1]{CP} or \cite[Lemma 2]{BrDo}.

In our situation, having only stopped estimates as in Lemma \ref{lem:reg}, it is necessary to also control the continuous solution. For certain quantities, having control until time $t_{m-1}$ is not sufficient  (see, for instance, the estimates for $I_2(m)$ and $I_3(m)$ below, where norms of $\bfu$ over $[t_{m-1},t_m]$ appear). Using $\mathfrak F_{t_{m}}$-measurable sets $\Omega^{h,\tau}_{m}\subset\Omega$ instead is not possible either as it destroys the martingale property
of $\mathscr M^1$ given below in \eqref{eq:2908}.

Both problems are overcome by the use of the discrete stopping time $\mathfrak t_m^R$: we can control norms of  $\bfu$ over $[t_{m-1},t_m]$, and $\mathscr M^1$ is estimated at time $\mathfrak t_R\geq \mathfrak t_m^R$ such that the martingale property can be used.

 {\bf 2.} The (discrete) gradient of the noise term in (\ref{txdiscrpi}) need not be subtracted here, as is in \cite{FePrVo}, since a {\em simultaneous  space-time} error analysis is used to prove Theorem \ref{thm:main} below.
\end{remark}

Our main result is now a direct consequence of Theorem \ref{thm:4}: Setting for $\varepsilon>0$ arbitrary $R=c^{-1/4}\sqrt[4]{-\varepsilon \log \min\{h,\tau\}}$, we have for any $\xi>0$
\begin{align*}
&\mathbb P\bigg[\max_{1\leq m\leq M}\|\bfu(t_m)-\bfu_{h,m}\|_{L^2_x}^2+\sum_{m=1}^M \tau\|\nabla\bfu(t_m)-\nabla\bfu_{h,m}\|_{L^2_x}^2>\xi\,\big(h^{2\beta-2\varepsilon}+\tau^{2\alpha-2\varepsilon}\big)\bigg]\\
&\leq \mathbb P\bigg[\max_{1\leq m\leq M}\|\bfu(\mathfrak t^R_{m})-\bfu_{h,m}^R\|_{L^2_x}^2+\sum_{m=1}^M \tau_{m}^R\|\nabla\bfu(\mathfrak t_{m}^R)-\nabla\bfu_{h,m}^R\|_{L^2_x}^2>\xi\,\big(h^{2\beta-2\varepsilon}+\tau^{2\alpha-2\varepsilon}\big)\bigg]\\
&\qquad+\mathbb P\big[\{\mathfrak t_R<T\}\big]
\rightarrow 0
\end{align*}
as $h,\tau\rightarrow0$ (recall that $\mathfrak t_R\rightarrow \infty$ $\p$-a.s. by Theorem \ref{thm:inc2d} and that $\mathfrak t_M^R<t_M$ implies $\mathfrak t_R<T$).
Relabeling $\alpha$ and $\beta$ we have proved the following result.

\begin{theorem}\label{thm:main}
Let $(\Omega,\mf,(\mf_t)_{t\geq0},\prst)$ be a given stochastic basis with a complete right-con\-ti\-nuous filtration and an $(\mf_t)$-cylindrical Wiener process $W$.
Let $\bfu_0\in L^8(\Omega,W^{2,2}(\mt,\R^2))\cap L^{20}(\Omega;W^{1,2}_{0,\Div}(\mt,\R^2))$ be $\F_0$-measurable and assume that $\Phi$ satisfies \eqref{eq:phi0}--\eqref{eq:phi2b}. Let $$(\bfu,(\mathfrak{t}_R)_{R\in\N},\mathfrak{t})$$ be the unique maximal global strong solution to \eqref{eq:SNS} from Theorem \ref{thm:inc2d}.
Then we have for any $\xi>0$, $\alpha<\frac{1}{2}$, $\beta<1$ 
\begin{align*}
&\mathbb P\bigg[\max_{1\leq m\leq M}\|\bfu(t_m)-\bfu_{h,m}\|_{L^2_x}^2+\sum_{m=1}^M \tau\|\nabla\bfu(t_m)-\nabla\bfu_{h,m}\|_{L^2_x}^2>\xi\,\big(h^{2\beta}+\tau^{2\alpha}\big)\bigg]\rightarrow 0
\end{align*}
as $h,\tau\rightarrow0$,
where $(\bfu_{h,m})_{m=1}^M$ is the solution to \eqref{txdiscr} with $\bfu_{h,0}=\Pi_h\bfu_0$.
\end{theorem}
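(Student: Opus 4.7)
\emph{Proof proposal.} The plan is to deduce Theorem \ref{thm:main} as an essentially direct consequence of Theorem \ref{thm:4}. The two main ingredients are (i) Markov's inequality applied to the global-in-$\Omega$ error bound from Theorem \ref{thm:4}, and (ii) a sample-space decomposition that isolates the event $[\mathfrak t_R \geq T]$; the truncation parameter $R = R(h,\tau)$ will then be sent to infinity \emph{slowly} (logarithmically in $\min\{h,\tau\}$) so as to simultaneously defeat the exponential factor $e^{cR^4}$ and ensure $\p[\mathfrak t_R < T] \to 0$.

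First I would observe the key identification: on the event $G_R := [\mathfrak t_R \geq T]$, the definition \eqref{eq:tmR} yields $\mathfrak t_m^R = t_m$ for every $m = 1,\dots,M$, hence $\tau_m^R = \tau$ and $\bfu(\mathfrak t_m^R) = \bfu(t_m)$. Comparing the defining relations \eqref{txdiscr} and \eqref{txdiscrR} and proceeding by induction on $m$ then yields $\bfu_{h,m}^R = \bfu_{h,m}$ on $G_R$. In particular, the error quantities appearing in Theorems \ref{thm:main} and \ref{thm:4} coincide pointwise on $G_R$.

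Fixing $\varepsilon \in (0,\min\{\alpha,\beta\})$ arbitrary, this identification gives
\begin{align*}
\p\bigg[&\max_{1\leq m\leq M}\|\bfu(t_m)-\bfu_{h,m}\|_{L^2_x}^2 + \sum_{m=1}^M \tau\|\nabla\bfu(t_m)-\nabla\bfu_{h,m}\|_{L^2_x}^2 > \xi(h^{2\beta-2\varepsilon}+\tau^{2\alpha-2\varepsilon})\bigg] \\
&\leq \p\bigg[\max_{1\leq m\leq M}\|\bfu(\mathfrak t_m^R)-\bfu_{h,m}^R\|_{L^2_x}^2 + \sum_{m=1}^M \tau_m^R\|\nabla\bfu(\mathfrak t_m^R)-\nabla\bfu_{h,m}^R\|_{L^2_x}^2 > \xi(h^{2\beta-2\varepsilon}+\tau^{2\alpha-2\varepsilon})\bigg] \\
&\quad + \p[\mathfrak t_R < T],
\end{align*}
since off $G_R$ the inclusion of events is trivial. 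Markov's inequality combined with Theorem \ref{thm:4} bounds the first summand on the right by
\begin{align*}
\frac{c\,e^{cR^4}\,(h^{2\beta}+\tau^{2\alpha})}{\xi\,(h^{2\beta-2\varepsilon}+\tau^{2\alpha-2\varepsilon})}.
\end{align*}

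The last step is the calibration $R = R(h,\tau) := c^{-1/4}\big(-\varepsilon \log \min\{h,\tau\}\big)^{1/4}$, which yields $e^{cR^4} = \min\{h,\tau\}^{-\varepsilon}$ and forces the above bound to vanish as $h,\tau \to 0$; the logarithmic growth of $R$ was tuned precisely to absorb the exponential penalty at the cost of the arbitrarily small rate loss $\varepsilon$. Simultaneously $R(h,\tau) \to \infty$, and since Theorem \ref{thm:inc2d} guarantees the maximal strong pathwise solution is global ($\mathfrak t = \infty$ $\p$-a.s., hence $\mathfrak t_R \nearrow \infty$ $\p$-a.s.), the second summand $\p[\mathfrak t_R < T]$ also tends to zero. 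Relabeling $\alpha - \varepsilon \mapsto \alpha$ and $\beta - \varepsilon \mapsto \beta$ recovers the full admissible ranges $\alpha < 1/2$, $\beta < 1$ stated in the theorem. All of the heavy lifting has been done in Theorem \ref{thm:4}; the only subtle point here is the logarithmic calibration of $R$ against the exponential factor $e^{cR^4}$, which is precisely the mechanism that converts a \emph{localised} $L^1(\Omega)$-estimate into convergence in probability.
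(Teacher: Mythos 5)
Your proposal is correct and follows essentially the same route as the paper: the paper likewise decomposes over the event $[\mathfrak t_R\ge T]$ (on which $\mathfrak t_m^R=t_m$ and $\bfu_{h,m}^R=\bfu_{h,m}$), applies Chebyshev/Markov to the bound $ce^{cR^4}(h^{2\beta}+\tau^{2\alpha})$ from Theorem \ref{thm:4} with the identical calibration $R=c^{-1/4}\sqrt[4]{-\varepsilon\log\min\{h,\tau\}}$, uses $\mathfrak t_R\to\infty$ $\p$-a.s.\ to kill $\p[\mathfrak t_R<T]$, and relabels $\alpha,\beta$. No substantive differences to report.
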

In order to finish the proof of our main result stated in Theorem \ref{thm:main} above, we focus now on proving the error estimate from Theorem \ref{thm:4}
concerning the auxiliary problem \eqref{txdiscrR}.

\begin{proof}[Proof of Theorem \ref{thm:4}]
Define the error $\bfe_{h,m}=\bfu(\mathfrak t_m^R)-\bfu_{h,m}^R$. Subtracting \eqref{eq:pressure} and \eqref{txdiscrR} and recalling that functions from $W^{1,2}_0(\mt,\R^2)$ are admissible in \eqref{eq:pressure} we obtain 
\begin{align*}
\int_{\mt}&\bfe_{h,m}\cdot\bfvarphi \dx +\int_{\mathfrak t_{m-1}^R}^{\mathfrak t_m^R}\int_{\mt}\mu\nabla\bfu(\sigma):\nabla\bfphi\dx\ds-\tau_{m}^R\int_{\mt}\mu\nabla\bfu^R_{h,m}:\nabla\bfphi\dx\\&=\int_{\mt}\bfe_{h,m-1}\cdot\bfvarphi \dx-\int_{\mathfrak t_{m-1}^R}^{\mathfrak t_m^R}\int_{\mt}(\bfu(\sigma)\cdot\nabla)\bfu(\sigma)\cdot\bfphi\dxs\\&\qquad+\tau_{m}^R\int_{\mt}\big((\bfu_{h,m-1}^R\cdot\nabla)\bfu_{h,m}^R+(\Div\bfu^R_{h,m-1})\bfu_{h,m}^R\big)\cdot\bfphi\dx\\
&\qquad+\int_{\mt}\int_{\mathfrak t_{m-1}^R}^{\mathfrak t_m^R}\Phi(\bfu(\sigma))\,\dd W\cdot \bfvarphi\dx-\int_{\mt}\int_{\mathfrak t_{m-1}^R}^{\mathfrak t_m^R}\Phi(\bfu_{h,m-1}^R)\,\dd W\cdot \bfvarphi\dx\\
&\qquad+\int_{\mathfrak t_{m-1}^R}^{\mathfrak t_m^R}\int_{\mt}\pi(\sigma)\,\Div\bfphi\dx\ds
\end{align*}
for every $\bfphi\in V^h(\mt,\R^2)$, which is equivalent to
\begin{align*}
\begin{aligned}
\int_{\mt}&\bfe_{h,m}\cdot\bfvarphi \dx +\tau_{m}^R\int_{\mt}\mu\Big(\nabla\bfu(\mathfrak t_{m}^R)-\nabla\bfu^R_{h,m}\Big):\nabla\bfphi\dx\\&=\int_{\mt}\bfe_{h,m-1}\cdot\bfvarphi \dx+\int_{\mathfrak t_{m-1}^R}^{\mathfrak t_m^R}\int_{\mt}\mu\big(\nabla\bfu(\mathfrak t_{m}^R)-\nabla\bfu(\sigma)\big):\nabla\bfphi\dx\ds
\\
&\qquad+\int_{\mathfrak t_{m-1}^R}^{\mathfrak t_m^R}\int_{\mt}\Big((\bfu(\mathfrak t_{m-1}^R)\cdot\nabla)\bfu(\mathfrak t_{m}^R)-(\bfu(\sigma)\cdot\nabla)\bfu(\sigma)\Big)\cdot\bfphi\dxs\\
&\qquad-\tau_{m}^R\int_{\mt}\Big((\bfu(\mathfrak t_{m-1}^R)\cdot\nabla)\bfu(\mathfrak t_{m}^R)-\big((\bfu^R_{h,m-1}\cdot\nabla)\bfu^R_{h,m}+(\Div\bfu^R_{h,m-1})\bfu^R_{h,m}\big)\Big)\cdot\bfphi\dx\\
&\qquad+\int_{\mt}\int_{\mathfrak t_{m-1}^R}^{\mathfrak t_m^R}\big(\Phi(\bfu(\sigma))-\Phi(\bfu^R_{h,m-1})\big)\,\dd W\cdot \bfvarphi\dx+\int_{\mathfrak t_{m-1}^R}^{\mathfrak t_m^R}\int_{\mt}\pi(\sigma)\,\Div\bfphi\dxs.
\end{aligned}
\end{align*}
Setting $\bfphi=\Pi_h\bfe_{h,m}$ and applying the identity $\bfa\cdot(\bfa-\bfb)=\frac{1}{2}\big(|\bfa|^2-|\bfb|^2+|\bfa-\bfb|^2\big)$ (which holds for any $\bfa,\bfb\in\mathbb R^n$) we gain
\begin{align*}
&\frac{1}{2}\big(\|\Pi_h\bfe_{h,m}\|_{L^2_x}^2-\|\Pi_h\bfe_{h,m-1}\|_{L^2_x}^2+\|\Pi_h\bfe_{h,m}-\Pi_h\bfe_{h,m-1}\|_{L^2_x}^2\big)+\tau_{m}^R\mu\|\nabla\bfe_{h,m}\|^2_{L^2_x}\\
&=\tau_{m}^R\int_{\mt}\mu\nabla\bfe_{h,m}:\nabla\big(\bfu(\mathfrak t_{m}^R)-\Pi_h\bfu(\mathfrak t_{m}^R)\big)\dx\\
&+\int_{\mathfrak t_{m-1}^R}^{\mathfrak t_{m}^R}\int_{\mt}\mu\big(\nabla\bfu(\mathfrak t_{m}^R)-\nabla\bfu(\sigma)\big):\nabla\Pi_h\bfe_{h,m}\dx\ds
\\
&+\int_{\mathfrak t_{m-1}^R}^{\mathfrak t_{m}^R}\int_{\mt}\Big((\bfu(\mathfrak t_{m-1}^R\cdot\nabla)\bfu(\mathfrak t_{m}^R)-(\bfu(\sigma)\cdot\nabla)\bfu(\sigma)\Big)\cdot\Pi_h\bfe_{h,m}\dxs\\
&-\tau_{m}^R\int_{\mt}\Big((\bfu(\mathfrak t_{m-1}^R)\cdot\nabla)\bfu(\mathfrak t_{m}^R)-\big((\bfu_{h,m-1}^R\cdot\nabla)\bfu^R_{h,m}+(\Div\bfu^R_{h,m-1})\bfu_{h,m}^R\big)\Big)\cdot\Pi_h\bfe_{h,m}\dx\\
&+\int_{\mt}\int_{\mathfrak t_{m-1}^R}^{\mathfrak t^R_m}\big(\Phi(\bfu(\sigma))-\Phi(\bfu^R_{h,m-1})\big)\,\dd W\cdot \Pi_h\bfe_{h,m}\dx\\
&+\int_{\mathfrak t_{m-1}^R}^{\mathfrak t_{m}^R}\int_{\mt}\pi(\sigma)\,\Div\Pi_h\bfe_{h,m}\dxs\\
&=:I_1(m)+\dots +I_6(m).
\end{align*}
Eventually, we will take the maximum with respect to $m\in\{1,\dots,M\}$ and apply expectations. Let us explain how to deal with $\E\big[\max_m I_1(m)],\dots ,\E[\max_mI_6(m)]$ independently.

We clearly have for any $\kappa>0$
\begin{align*}
I_1(m)&\leq \,\kappa\tau_{m}^R\|\nabla\bfe_{h,m}\|_{L^2_x}^2+c(\kappa)\tau_{m}^R\|\nabla(\bfu(\mathfrak t_{m}^R)-\Pi_h\bfu(\mathfrak t_{m}^R)\|^2_{L^2_x}\\
&\leq \,\kappa\tau_{m}^R\|\nabla\bfe_{h,m}\|^2_{L^2_x}+c(\kappa)\tau h^{2\beta}\|\bfu(\mathfrak t_{m}^R)\|^2_{W^{1+\beta,2}_x}
\end{align*}
due to the $W^{1,2}_x$-stability of
$\Pi_h$, cf.~\eqref{eq:stab'}. Note that the expectation of the last term may be bounded with the help of Lemma \ref{lem:reg} (c)  
using $\mathfrak t_{m}^R\leq\mathfrak t_R$. We continue
with $I_2(m)$, for which we obtain
\begin{align*}
I_2(m)&\leq \,\kappa\tau_{m}^R\|\nabla\Pi_h\bfe_{h,m}\|_{L^2_x}^2+c(\kappa)\int_{\mathfrak t_{m-1}^R}^{\mathfrak t_{m}^R}\|\nabla(\bfu(\mathfrak t_{m}^R)-\bfu(\sigma))\|_{L^2_x}^2\ds\\
&\leq \,\kappa\tau_{m}^R\|\nabla\bfe_{h,m}\|^2_{L^2_x}+\,\kappa\tau_{m}^R\|\nabla(\bfu(\mathfrak t_m^R)-\Pi_h\bfu(\mathfrak t_m^R))\|_{L^2_x}^2+c(\kappa)\tau^{1+2\alpha}\|\nabla\bfu\|_{C^\alpha([\mathfrak t_{m-1}^R,\mathfrak t_{m}^R];L^2_x)}^2,
\end{align*}
where the last term can be controlled by Corollary \ref{cor:uholder} and the second last one by \eqref{eq:stab'} and \ref{lem:reg} (c) as for $I_2(m)$.
We proceed by
\begin{align*}
I_3(m)&=-\int_{\mathfrak t_{m-1}^R}^{\mathfrak t_{m}^R}\int_{\mt}\Big(\bfu(\mathfrak t_{m}^R)\otimes\bfu(\mathfrak t_{m-1}^R)-\bfu(\sigma)\otimes\bfu(\sigma)\Big):\nabla\Pi_h\bfe_{h,m}\dxs\\
&\leq \,\kappa\tau_{m}^R\|\nabla\Pi_h\bfe_{h,m}\|^2_{L^2_x}+c(\kappa)\int_{\mathfrak t_{m-1}^R}^{\mathfrak t_{m}^R}\|\bfu(\mathfrak t_{m}^R)\otimes\bfu(\mathfrak t_{m-1}^R)-\bfu(\sigma)\otimes\bfu(\sigma)\|^2_{L^2_x}\ds\\
&\leq \,\kappa\tau_{m}^R\|\nabla\bfe_{h,m}\|^2_{L^2_x}+\,\kappa\tau_{m}^R\|\nabla(\bfu(\mathfrak t_m^R)-\Pi_h\bfu(\mathfrak t_m^R))\|_{L^2_x}\\&\qquad+c(\kappa)\tau^{1+2\alpha}\|\bfu\|^2_{L^\infty((\mathfrak t_{m-1}^R,\mathfrak t_{m}^R)\times\mt)}\|\bfu\|_{{C^\alpha([\mathfrak t_{m-1}^R,\mathfrak t_{m}^R];L^2_x)}}^2\\
&\leq \,\kappa\tau_{m}^R\|\nabla\bfe_{h,m}\|^2_{L^2_x}+c(\kappa)\tau h^{2\beta}\|\bfu(\mathfrak t_{m}^R)\|^2_{W^{1+\beta,2}_x}\\&\qquad+c(\kappa)\tau^{1+2\alpha}\|\bfu\|^2_{L^\infty(\mathfrak t_{m-1}^R,\mathfrak t_{m}^R;W^{2,2}_x)}\|\bfu\|_{{C^\alpha([\mathfrak t_{m-1}^R,\mathfrak t_{m}^R];L^2_x)}}^2,
\end{align*}
where we used Sobolev's embedding $W^{2,2}(\mt,\R^2)\hookrightarrow L^\infty(\mt,\R^2)$. We can apply again
Lemma \ref{lem:reg} (c)  and Corollary \ref{cor:uholder} to the last term.
The term $I_4(m)$ can be decomposed as
\begin{align*}
I_4(m)&=I_4^1(m)+I_4^2(m)+I_4^3(m),\\
I_4^1(m)&=-\tau_{m}^R\int_{\mt}(\bfu(\mathfrak t_{m-1}^R\cdot\nabla)\bfe_{h,m}\cdot\big(\bfu(\mathfrak t_{m}^R)-\Pi_h\bfu(\mathfrak t_{m}^R)\big)\dx,\\
I_4^2(m)&=\tau_{m}^R\int_{\mt}(\bfe_{h,m-1}\cdot\nabla)\bfe_{h,m}\cdot\big(\bfu(\mathfrak t_{m}^R)-\Pi_h\bfu(\mathfrak t_{m}^R)\big)\dx\\
&\qquad+\tau_{m}^R\int_{\mt}(\Div\bfe_{h,m-1})\bfe_{h,m}\cdot\big(\bfu(\mathfrak t_{m}^R)-\Pi_h\bfu(\mathfrak t_{m}^R)\big)\dx,\\
I_4^3(m)&=-\tau_{m}^R\int_{\mt}(\bfe_{h,m-1}\cdot\nabla)\Pi_h\bfe_{h,m}\cdot\bfu(\mathfrak t_{m}^R)\dx\\
&\qquad-\tau_{m}^R\int_{\mt}(\Div\bfe_{h,m-1})\Pi_h\bfe_{h,m}\cdot\bfu(\mathfrak t_{m}^R)\dx.
\end{align*}
We obtain for any $\kappa>0$
\begin{align*}
I_4^1(m)&\leq\,\tau_{m}^R\|\nabla\bfe_{h,m}\|^2_{L^2_x}\|\bfu(\mathfrak t_{m-1}^R)\|_{L^4_x}\|\bfu(\mathfrak t_{m}^R)-\Pi_h\bfu(\mathfrak t_{m}^R)\|_{L^4_x}\\
&\leq\,c\tau_{m}^R\|\nabla\bfe_{h,m}\|^2_{L^2_x}\|\bfu(\mathfrak t_{m-1}^R)\|_{W^{1,2}_x}\|\bfu(\mathfrak t_{m}^R)-\Pi_h\bfu(\mathfrak t_{m}^R)\|_{L^2_x}^{\frac{1}{2}}\|\nabla(\bfu(\mathfrak t_{m}^R)-\Pi_h\bfu(\mathfrak t_{m}^R))\|_{L^2_x}^{\frac{1}{2}}\\
&\leq \,c\tau_{m}^R\|\nabla\bfe_{h,m}\|_{L^2_x}h^{1+\beta/2} R\|\bfu(\mathfrak t_{m}^R)\|_{W^{1+\beta,2}_x}\\
&\leq \,\kappa\tau_{m}^R\|\nabla\bfe_{h,m}\|^2_{L^2_x}+c(\kappa) h^{2+\beta} R^2\tau_{m}^R\|\bfu(\mathfrak t_{m}^R)\|_{W^{1+\beta,2}_x}^2
\end{align*}
by the embedding $W^{1,2}(\mt,\R^2)\hookrightarrow L^4(\mt,\R^2)$, Ladyshenskaya's inequality, the definition of $\mathfrak t_m^R$, and \eqref{eq:stab'}. The first term can be absorbed for $\kappa$ small enough, whereas the second one (in summed form and expectation) is bounded by $h^{2+\beta}R^{12}$ due Lemma \ref{lem:reg} (c). Similarly, we have
\begin{align*}
I_4^2(m)&\leq \tau_{m}^R\|\nabla\bfe_{h,m}\|_{L^2_x}\|\bfe_{h,m-1}\|_{L^4_x}\|\bfu(\mathfrak t_{m}^R)-\Pi_h\bfu(\mathfrak t_{m}^R)\|_{L^4_x}\\
&\qquad+\tau_{m}^R\|\nabla\bfe_{h,m-1}\|_{L^2_x}\|\bfe_{h,m}\|_{L^4_x}\|\bfu(\mathfrak t_{m}^R)-\Pi_h\bfu(\mathfrak t_{m}^R)\|_{L^4_x}\\
&\leq \,c\tau_{m}^Rh^{1+\beta/2}\|\nabla\bfe_{h,m}\|_{L^2_x}\|\bfe_{h,m-1}\|_{L^2_x}^{\frac{1}{2}}\|\nabla\bfe_{h,m-1}\|^{\frac{1}{2}}_{L^2_x}\|\bfu(\mathfrak t_{m}^R)\|_{W^{1+\beta,2}_x}\\
&\qquad+c\tau_{m}^Rh^{1+\beta/2}\|\nabla\bfe_{h,m-1}\|_{L^2_x}\|\bfe_{h,m}\|_{L^2_x}^{\frac{1}{2}}\|\nabla\bfe_{h,m}\|_{L^2_x}^{\frac{1}{2}}\|\bfu(\mathfrak t_{m}^R)\|_{W^{1+\beta,2}_x}\\
&\leq\,\kappa\tau_{m}^R\Big(\|\nabla\bfe_{h,m-1}\|^2_{L^2_x}+\|\nabla\bfe_{h,m}\|^2_{L^2_x}\Big)\\
&\qquad+c(\kappa)\,\tau_{m}^R \,h^{4+2\beta}\Big(\max_{1\leq n\leq m}\|\bfe_{h,n}\|_{L^2_x}^2\Big)\|\bfu(\mathfrak t_{m}^R)\|^4_{W^{1+\beta}_x}.
\end{align*}
The last term (in summed form and expectation) can be controlled by Lemma \ref{lem:reg} (c) (with $r=8$) and Lemma \ref{lemma:3.1BCP} (with $q=2$). Note that we have either have $\bfu_{h,m}=\bfu_{h,m}^R$ or $\tau_m^R=0$.
Finally, by definition of $\mathfrak t_m^R$,
\begin{align*}
I_4^3(m)&\leq \tau_{m}^R\|\nabla\Pi_h\bfe_{h,m}\|_{L^2_x}\|\bfe_{h,m-1}\|_{L^4_x}\|\bfu(\mathfrak t_{m}^R)\|_{L^4_x}\\
&\qquad+ \tau_{m}^R\|\nabla\bfe_{h,m-1}\|_{L^2_x}\|\Pi_h\bfe_{h,m}\|_{L^4_x}\|\bfu(\mathfrak t_{m}^R)\|_{L^4_x}\\
&\leq \tau_{m}^R\|\nabla\bfe_{h,m}\|_{L^2_x}\|\bfe_{h,m-1}\|_{L^2_x}^{\frac{1}{2}}\|\nabla\bfe_{h,m-1}\|^{\frac{1}{2}}_{L^2_x}\|\bfu(\mathfrak t_{m}^R)\|_{W^{1,2}_x}\\
&\qquad+ \tau_{m}^R\|\nabla\bfe_{h,m-1}\|_{L^2_x}\|\Pi_h\bfe_{h,m}\|_{L^2_x}^{\frac{1}{2}}\|\nabla\Pi_h\bfe_{h,m}\|^{\frac{1}{2}}_{L^2_x}\|\bfu(\mathfrak t_{m}^R)\|_{W^{1,2}_x}\\
&\leq\,\kappa\tau_{m}^R\Big(\|\nabla\bfe_{h,m-1}\|^2_{L^2_x}+\|\nabla\Pi_h\bfe_{h,m}\|^2_{L^2_x}\Big)+c_\kappa\,\tau_{m}^R R^4\big(\|\Pi_h\bfe_{h,m}\|^2_{L^2_x}+\|\bfe_{h,m-1}\|^2_{L^2_x}\big)\\
&\leq\,\kappa\tau_{m}^R\Big(\|\nabla\bfe_{h,m-1}\|^2_{L^2_x}+\|\nabla\bfe_{h,m}\|^2_{L^2_x}\Big)+c(\kappa)\,\tau_{m}^R R^4\big(\|\Pi_h\bfe_{h,m}\|^2_{L^2_x}+\|\Pi_h\bfe_{h,m-1}\|^2_{L^2_x}\big)\\
&\qquad c(\kappa)\,\tau_m^R\|\nabla(\bfu(\mathfrak t_{m}^R)-\Pi_h\bfu(\mathfrak t_m^R))\|^2_{L^{2}_x}+c(\kappa)\,\tau_{m}^R R^4\|\bfu(\mathfrak t_{m-1}^R)-\Pi_h\bfu(\mathfrak t_{m-1}^R)\|^2_{L^2_x}.
\end{align*}
The second last term will be dealt with by Gronwall's lemma leading to a constant of the form $c e^{cR^4}$. The final line is bounded by 
$c(\kappa)\,\tau_{m}^R R^4h^{2\beta}\|\bfu(\mathfrak t_{m-1}^R)\|^2_{W^{1+\beta,2}_x}$ using \eqref{eq:stab'} and hence can be controlled by Lemma \ref{lem:reg} (c).

In order to estimate the stochastic term we write
\begin{align}
\mathscr M_{m}&=\sum_{n=1}^mI_5(n)=
\sum_{n=1}^m\int_{\mathcal{O}}\int_{\mathfrak t_{n-1}^R}^{\mathfrak t_{n}^R}\big(\Phi(\bfu)-\Phi(\bfu_{h,n-1}^R)\big)\,\dd W\cdot \Pi_h\bfe_{h,n}\dx\nonumber\\
&= \sum_{n=1}^m\int_{\mathcal{O}}\int_{\mathfrak t_{n-1}^R}^{\mathfrak t_{n}^R}\big(\Phi(\bfu)-\Phi(\bfu_{h,n-1}^R)\big)\,\dd W\cdot \Pi_h\bfe_{h,n-1}\dx\nonumber\\
&+ \sum_{n=1}^m\int_{\mathcal{O}}\int_{\mathfrak t_{n-1}^R}^{\mathfrak t_{n}^R}\big(\Phi(\bfu)-\Phi(\bfu_{h,n-1}^R)\big)\,\dd W\cdot \Pi_h(\bfe_{h,n}-\bfe_{h,n-1})\dx\nonumber\\
&= \int_{0}^{\mathfrak t_{m}^R}\sum_{n=1}^M\mathbf1_{[t_{n-1},t_n)}\int_{\mathcal{O}}\big(\Phi(\bfu)-\Phi(\bfu_{h,n-1}^R)\big)\,\dd W\cdot \Pi_h\bfe_{h,n-1}\dx\nonumber\\
&+ \sum_{n=1}^m\int_{\mathcal{O}}\int_{\mathfrak t_{n-1}^R}^{\mathfrak t_{n}^R}\big(\Phi(\bfu)-\Phi(\bfu_{h,n-1}^R)\big)\,\dd W\cdot \Pi_h(\bfe_{h,n}-\bfe_{h,n-1})\dx\nonumber\\
&=:\mathscr M^1(\mathfrak t_{m}^R)+\mathscr M_{m}^2.\label{eq:2908}
\end{align}
Since the process $(\mathscr M^1(t\wedge\mathfrak t_R))_{t\geq0}$ is an $(\mathfrak F_t)$-martingale we gain by the Burgholder-Davis-Gundy inequality (using that $\mathfrak t_M^R\leq \mathfrak t_R$ by definition)
\begin{align*}
&\E\bigg[\max_{1\leq m\leq M}\big|\mathscr M^1(\mathfrak t_m^R)\big|\bigg]\leq \E\bigg[\sup_{s\in[0,\mathfrak t_M^R]}\big|\mathscr M^1(s)\big|\bigg]\leq \E\bigg[\sup_{s\in[0,T]}\big|\mathscr M^1(s\wedge \mathfrak t_R)\big|\bigg]\\
&\leq\,c\,\E\bigg[\bigg(\int_{0}^{T \wedge\mathfrak t_{R}}\sum_{n=1}^M\mathbf1_{[t_{n-1},t_n)}\|\Phi(\bfu)-\Phi(\bfu_{h,n-1}^R)\|^2_{L_2(\mathfrak U,L^2_x)}\|\Pi_h\bfe_{h,n-1}\|^2_{L^2_x}\dt\bigg)^{\frac{1}{2}}\bigg]\\
&\leq\,c\,\E\bigg[\max_{1\leq n\leq M}\|\Pi_h\bfe_{h,n}\|_{L^2_x}\bigg(\int_{0}^{T\wedge\mathfrak t_R}\sum_{n=1}^M\mathbf1_{[t_{n-1},t_n)}\|\Phi(\bfu)-\Phi(\bfu_{h,n-1}^R)\|^2_{L_2(\mathfrak U,L^2_x)}\dt\bigg)^{\frac{1}{2}}\bigg]\\
&\leq\,\kappa\,\E\bigg[\max_{1\leq n\leq M}\|\Pi_h\bfe_{h,n}\|^2_{L^2_x}\bigg]+\,c_\kappa\,\E\bigg[\int_{0}^{T\wedge\mathfrak t_R}\sum_{n=1}^M\mathbf1_{[t_{n-1},t_n)}\|\bfu-\bfu_{h,n-1}^R\|_{L^2_x}^2\dt\bigg].
\end{align*}
Here, we also used \eqref{eq:phi0} as well as Young's inequality for $\kappa>0$ arbitrary.
Since $\bfu_{h,n-1}^R=\bfe_{h,n-1}+\bfu(\mathfrak t_{n-1}^R)$ is $V_{\Div}^h(\mt,\R^2)$-valued, we further estimate
\begin{align*}
&\E\bigg[\max_{1\leq m\leq M}\big|\mathscr M^1(\mathfrak t_m^R)\big|\bigg]\\&\leq\,\kappa\,\E\bigg[\max_{1\leq n\leq M}\|\Pi_h\bfe_{h,n}\|^2_{L^2_x}\bigg]+\,c(\kappa)\,\E\bigg[\int_{0}^{T\wedge\mathfrak t_R}\sum_{n=1}^M\mathbf1_{[t_{n-1},t_n)}\|\bfu-\bfu(\mathfrak t_{n-1}^R)\|_{L^2_x}^2\dt\bigg]\\
&\qquad+\,c(\kappa)\,\E\bigg[\int_{ 0}^{T\wedge\mathfrak t_R}\sum_{n=1}^M\mathbf1_{[t_{n-1},t_n)}\|\bfu(\mathfrak t_{n-1}^R)-\Pi_h\bfu(\mathfrak t_{n-1}^R)\|_{L^2_x}^2\dt\bigg]\\&\qquad+\,c(\kappa)\,\E\bigg[\int_{0}^{T\wedge\mathfrak t_R}\sum_{n=1}^M\mathbf1_{[t_{n-1},t_n)}\|\Pi_h\bfe_{h,n-1}\|_{L^2_x}^2\dt\bigg]
\end{align*}
We bound the last term by
\begin{align*}
\E\bigg[\int_{0}^{T\wedge\mathfrak t_R}\sum_{n=1}^M\mathbf1_{[t_{n-1},t_n)}\|\Pi_h\bfe_{h,n-1}\|_{L^2_x}^2\dt\bigg]&\leq \E\bigg[\sum_{n=1}^{M+1}\tau_n^R\|\Pi_h\bfe_{h,n-1}\|_{L^2_x}^2\dt\bigg]\\
&\leq \E\bigg[\sum_{n=0}^{M}\tau_n^R\|\Pi_h\bfe_{h,n}\|_{L^2_x}^2\dt\bigg]
\end{align*}
using that $\mathfrak t_R\wedge t_M\leq \mathfrak t^R_{M+1}$ and $\tau_n^R\leq\tau_{n-1}^R$ with $\tau_0^R:=\tau$.
 Applying \eqref{eq:stab'} as well as Lemma \ref{lem:reg} (b) and Corollary \ref{cor:uholder} (b) we gain
\begin{align*}
\E\bigg[\max_{1\leq m\leq M}\big|\mathscr M^1(\mathfrak t_m^R)\big|\bigg]&\leq\,\kappa\,\E\bigg[\max_{1\leq n\leq M}\|\Pi_h\bfe_{h,n}\|^2_{L^2_x}\bigg]+\,c(\kappa)\,\E\bigg[\sum_{n=0}^M \tau_{n}^R\|\Pi_h\bfe_{h,n}\|_{L^2_x}^2\bigg]\\
&+c(\kappa)\tau^{2\alpha}\E\big[\|\bfu\|_{C^\alpha([0,T\wedge\mathfrak t_R],L^2_x)}^2\big]+c(\kappa) h^{1+\beta}\E\bigg[\sup_{t\in[0,T]}\int_{\mathcal{O}}|\nabla\bfu(t\wedge \mathfrak t_R)|^2\dx\bigg]\\
&\leq\,\kappa\,\E\bigg[\max_{1\leq n\leq M}\|\Pi_h\bfe_{h,n}\|^2_{L^2_x}\bigg]+\,c(\kappa)\,\E\bigg[\sum_{n=0}^M \tau\|\Pi_h\bfe_{h,n}\|_{L^2_x}^2\bigg]\\&+c(\kappa)\tau^{2\alpha}R^{20}+c(\kappa) h^{1+\beta}R^6.
\end{align*}
Similarly: on using Cauchy-Schwartz inequality, Young's inequality, It\^{o}-isometry and \eqref{eq:phi0} we have for $\kappa>0$
\begin{align*}
&\E\bigg[\max_{1\leq m\leq M}|\mathscr M_{m}^2|\bigg]\\&\leq \E\bigg[ \sum_{n=1}^M\bigg( \kappa \|\Pi_h(\bfe_{h,n}-\bfe_{h,n-1})\|_{L^2_x}^2 +c(\kappa) \left\| \int_{\mathfrak t_{n-1}^R}^{\mathfrak t_{n}^R}\big(\Phi(\bfu)-\Phi(\bfu_{h,n-1}^R)\big)\,\dd W  \right\|_{L^2_x}^2\bigg) \bigg]\\
&\leq \kappa\E\bigg[ \sum_{n=1}^M \|\Pi_h(\bfe_{h,n}-\bfe_{h,n-1})\|_{L^2_x}^2 \bigg] + c(\kappa)\E\bigg[\sum_{n=1}^M\int_{ \mathfrak t_{n-1}^R}^{\mathfrak t_{n}^R}\|\bfu-\bfu_{h,n-1}^R\|_{L^2_x}^2\dt\bigg]\\
&\leq \kappa\E\bigg[ \sum_{n=1}^M \|\Pi_h(\bfe_{h,n}-\bfe_{h,n-1})\|_{L^2_x}^2 \bigg] +c(\kappa) \,\E\bigg[\sum_{n=1}^M \int_{ \mathfrak t_{n-1}^R}^{\mathfrak t_{n}^R}\|\bfu-\bfu(t_{n-1}^R)\|_{L^2_x}^2\dt\bigg]\\&\qquad+c_\kappa \,\E\bigg[\sum_{n=1}^M \tau_{n,h}^R\|\bfu(\mathfrak t_{n-1}^R)-\Pi_h\bfu(\mathfrak t_{n-1}^R)\|_{L^2_x}^2\bigg]+\,c(\kappa)\,\E\bigg[\sum_{n=1}^M \tau_{n}^R\|\Pi_h\bfe_{h,n-1}\|_{L^2_x}^2\bigg]\\
&\leq \kappa\E\bigg[ \sum_{n=1}^M \|\Pi_h(\bfe_{h,n}-\bfe_{h,n-1})\|_{L^2_x}^2 \bigg] +\,c(\kappa)\,\E\bigg[\sum_{n=1}^M \tau_{n}^R\|\Pi_h\bfe_{h,n-1}\|_{L^2_x}^2\bigg]\\&+c(\kappa)\tau^{2\alpha}R^{20}+c(\kappa)h^2R^6
\end{align*}
as a consequence Lemma \ref{lem:reg} (b) (using also \eqref{eq:stab}) and Corollary \ref{cor:uholder} (b).\\
Finally, we have by \eqref{eq:stabpi}
\begin{align*}
I_6(m)&=\int_{\mathfrak t_{m-1}^R}^{\mathfrak t_m^R}\int_{\mt}\big(\pi-\Pi_h^\pi\pi\big)\Div\Pi_h\bfe_{h,m}\dx\ds\\
&\leq \,c(\kappa)\int_{\mathfrak t_{m-1}^R}^{\mathfrak t_m^R} \,\|\pi-\Pi_h^\pi\pi\|_{L^2_x}^2\ds+\,\kappa\tau\,\|\nabla\Pi_h\bfe_{h,m}\|_{L^2_x}^2\\
&\leq \, c(\kappa) h^2\int_{\mathfrak t_{m-1}^R}^{\mathfrak t_m^R}\,\|\nabla\pi\|_{L^2_x}^2\ds+\,\kappa\tau\,\|\nabla\bfe_{h,m}\|_{L^2_x}^2,
\end{align*}
where $\kappa>0$ is arbitrary. The first term is summable in expectation with bound $c(\kappa) h^2 R^{12}$ due to Lemma \ref{lem:pressure} (b) and the last one can be absorbed. Collecting all estimates, choosing $\kappa$ small enough and applying Gronwall's lemma yields the claim.
\end{proof}


\begin{thebibliography}{[M]}
\bibitem{ADN} S.~Agmon, A.~Douglis, and L.~Nirenberg: \emph{Estimates near the boundary for solutions of elliptic partial differential
equations satisfying general boundary conditions. I.} Comm. Pure Appl. Math. 12 (1959), 623--727.
\bibitem{BeMi1} H.~Bessaih, A.~Millet: \emph{Strong $L^2$ convergence of time numerical schemes for the stochastic 2D Navier-Stokes equation,} IMA Journal of Numerical Analysis, 39-4 (2019), pp.~2135--2167.
\bibitem{BeMi2} H.~Bessaih, A.~Millet: \emph{Strong rates of convergence of space-time discretization schemes for the 2D Navier-Stokes equations with additive noise},  Stoch. Dyn. 22, 2240005. (2022) 

\bibitem{Br} D. Breit: \emph{Existence theory for stochastic power law fluids,} J. Math. Fluid Mech. 17, 295--326. (2015)

\bibitem{BrDo} D.~Breit \& A.~Dodgson: \emph{Convergence rates for the numerical approximation of the 2D stochastic Navier--Stokes equations.} Numer.~Math.~147, 553--578. (2021)
\bibitem{brenner1} S.C. Brenner, L.R. Scott: \emph{The mathematical theory of finite element methods} Springer. (2002)
\bibitem{brezzi1} F. Brezzi, M. Fortin: \emph{Mixed and hybrid finite element methods}, Springer. (1991)
\bibitem{BCP}
Z.~Brze\'zniak, E.~Carelli, A.~Prohl (2013): \emph{Finite-element-based discretizations of the incompressible Navier--Stokes
equations with multiplicative random forcing}, IMA J.~Num.~Anal.~33, pp.~771--824.
\bibitem{CP}
E.~Carelli, J. A.~Prohl (2012): \emph{Rates of convergence for discretizations of the stochastic incom- pressible Navier-Stokes equations.} SIAM J.~Numer.~Anal.~50(5), pp.~2467--2496.
\bibitem{Ca} M.~Capi\'nski: \emph{A note on uniqueness of stochastic Navier-Stokes equations,} Univ.~Iagell. Acta Math.~30 (1993), pp.~219--228.

\bibitem{CC} M.~Capi\'nski, N.\,J.~Cutland: \emph{Stochastic Navier-Stokes equations,} Acta Appl. Math. 25 (1991), pp.~59--85.
   \bibitem{PrZa} G.~Da Prato, J.~Zabczyk (1992): \emph{Stochastic Equations in Infinite Dimensions,} Encyclopedia
Math.~Appl., vol.~44, Cambridge University Press, Cambridge. 


\bibitem{FePrVo} X.~Feng, A.~Prohl, L.~Vo: \emph{Optimally convergent mixed finite element methods for the stochastic Stokes equations.} IMA J.~Num.~Anal.~(2021).

\bibitem{Ga} G. P. Galdi: \emph{An Introduction to the Mathemaical Theory of the Navier--Stokes equations. Steady-Sate Problems.} 2nd Edition. Springer Monographs in Mathematics. Springer, New York Dordrecht Heidelberg London. (2011) 

\bibitem{GaSiSo} G.~P. Galdi, C.~G. Simader, H. Sohr (2005): \emph{A class of solutions to stationary Stokes and Navier-Stokes
equations with boundary data in $W^{-1/q,q}$,} Math. Ann. 331, 41--74.



\bibitem{GR} V.~Girault, P.A.~Raviart: \emph{Finite element methods for Navier-Stokes equations}, Springer, New York (1986).

\bibitem{GlZi} N. Glatt-Holz, M. Ziane: \emph{Strong Pathwise Solutions of the Stochastic Navier-Stokes System.} Adv. Diff. Equ. 14, 567--600. (2009)

\bibitem{HR} J.G. Heywood \& R. Rannacher: \emph{Finite element approximation of the nonstationary
Navier--Stokes problem. I. Regularity of solutions and second-order error estimates for
spatial discretization.} SIAM J. Numer. Anal. 19, 275--311. (1982)

\bibitem{Kr} N. V. Krylov: \emph{A $W^{n,2}$-theory of the Dirichlet problem for SPDEs in general
smooth domains.} Probab. Theory Rel. Fields 98, 389--421. (1994)


\bibitem{KuVi} I. Kukavica, V. Vicol.
\emph{On moments for strong solutions of the 2D stochastic Navier--Stokes equations in a bounded domain.}
Asympt. Anal. 90 (2014), no. 3-4, 189--206.

\bibitem{KukShi}
S.~Kuksin and A.~Shirikyan.
\newblock {\em Mathematics of two-dimensional turbulence}, volume 194 of {\em
  Cambridge Tracts in Mathematics}.
\newblock Cambridge University Press, Cambridge, 2012.


%
%
%
%
\bibitem{PrRo}  C. Pr\'{e}v\^{o}t, M. R\"ockner (2007): \emph{A concise course on stochastic partial differential equations.} Lecture Notes in Mathematics, 1905. Springer, Berlin.

\bibitem{Mi} R. Mikulevicius: \emph{On strong $H^1_
2$-solutions of stochastic Navier--Stokes equations in a bounded domain.} SIAM J. Math. Anal. 
Vol. 41, No. 3, pp. 1206--1230. (2009)


\bibitem{Pr} J.~Printems: \emph{On the discretization in time of parabolic stochastic partial differential equations.}
Math.~Mod.~Numer.~Anal.~35, pp.~1055--1078. (2001)


\bibitem{Joern} J. Wichmann: \emph{Temporal regularity of symmetric stochastic p-Stokes systems.} Preprint at arXiv:2209.02796.

\bibitem{Soa} V. A. Solonnikov: \emph{Estimates for solutions of a non-stationary linearized system of Navier--Stokes equations,} (Russian) Trudy Mat. Inst. Steklov. 70, 213--317. (1964) 

\bibitem{Sob} V. A. Solonnikov: \emph{A priori estimates for solutions of second-order equations of parabolic type.} (Russian) Trudy Mat. Inst. Steklov. 70, 133--212. (1964)


%




\end{thebibliography}
\end{document}